\newtheorem{theorem}{Theorem}
\newtheorem{lemma}[theorem]{Lemma}
\newtheorem{proposition}[theorem]{Proposition}
\newtheorem{corollary}[theorem]{Corollary}
\newtheorem{remark}[theorem]{Remark}
\title{On computational and combinatorial properties of the total co-independent domination number of graphs}
\author{Abel Cabrera Mart\'inez$^{(1)}$, Frank A. Hern\'andez Mira $^{(1)}$\\ Jos\'e M. Sigarreta Almira$^{(1)}$ and Ismael G. Yero$^{(2)}$\\
\\
$^{(1)}$ {\small Facultad de  Matem\'{a}ticas, Universidad Aut\'onoma de Guerrero}\\
{\small Carlos E. Adame 5, Col. La Garita, Acapulco, Guerrero, Mexico}\\
{\small\it abecamar\@@gmail.com}, {\small\it fmira8906\@@gmail.com}, {\small\it josemariasigarretaalmira\@@hotmail.com}\\
$^{(2)}${\small Departamento de Matem\'aticas, Escuela Polit\'ecnica Superior de Algeciras}\\
{\small Universidad de C\'adiz,} {\small
Av. Ram\'on Puyol s/n, 11202 Algeciras, Spain.} \\ {\small\it
ismael.gonzalez\@@uca.es}
}
\date{}
\begin{document}

\maketitle

\begin{abstract}
A subset $D$ of vertices of a graph $G$ is a total dominating set if every vertex of $G$ is adjacent to at least one vertex of $D$. The total dominating set $D$ is called a total co-independent dominating set if the subgraph induced by $V-D$ is edgeless and has at least one vertex. The minimum cardinality of any total co-independent dominating set is the total co-independent domination number of $G$ and is denoted by $\gamma_{t,coi}(G)$. In this work we study some complexity and combinatorial properties of $\gamma_{t,coi}(G)$. Specifically, we prove that deciding whether $\gamma_{t,coi}(G)\le k$ for a given integer $k$ is an NP-complete problem and give several bounds on $\gamma_{t,coi}(G)$. Also, since any total co-independent dominating set is also a total dominating set, we characterize all the trees having equal total co-independent domination number and total domination number.
\end{abstract}

{\it Keywords:} total co-independent domination; total domination; vertex independence; vertex cover.

{\it AMS Subject Classification Numbers:} 05C69

\section{Introduction} \label{Intro}

Problems concerning domination in graphs are one of the most popular and highly investigated ones in the area of graph theory, and a rich literature in the topic is nowadays known in the research community. Such problems run from the theoretical point of view till several practical applications in real life situations. The most interesting cases of such applications are probably regarding problems in computer science. One of the most interesting features of domination in graphs involves the existence of a very high number of variants of domination parameters. A very common combination of domination is made with vertex independence of graphs, and the way of combining both concepts groups a considerable number of possibilities. In this work, we center our attention into precisely study one of these combinations between domination and independence in graphs, namely the total co-independence domination parameter. We focus the investigation on some computational complexity aspects of this parameter as well as on combinatorial properties of it.

Given a graph $G$ with vertex set $V(G)$ and edge set $E(G)$, a set $D\subset V(G)$ is a \emph{total dominating set} of $G$ if every vertex in $V(G)$ is adjacent to at least one vertex in $D$.  The \emph{total domination number} of $G$ is the minimum cardinality of any total dominating set in $G$ and is denoted by $\gamma_t(G)$. A $\gamma_t(G)$-set is a total dominating set of cardinality $\gamma_t(G)$. For more information on total domination we suggest the recent and fairly complete survey \cite{Henning2009} and the book \cite{book-total-dom}. A set $S$ of vertices is \emph{independent} if $S$ induced an edgeless graph. An independent set of maximum cardinality is a \emph{maximum independent set} of $G$. The \emph{independence number} of $G$ is the cardinality of a maximum independent set of $G$ and is denoted by $\beta(G)$. An independent set of cardinality $\beta(G)$ is called a $\beta(G)$-\emph{set}. Relationships between (total) domination and independence in graphs have attracted the attention of several researchers in the last years. Several interesting connection among these parameters include independent dominating sets \cite{berge,ore}, partitions into a dominating set and an independent set \cite{Lowenstein}, (total) dominating sets which intersect every maximal independent set \cite{yero,yero2,hamid}, and some other ones more, which we prefer to not mention here, since it is not the goal of this work.

A total dominating set $D$ of a graph $G$ is called a \emph{total co-independent dominating set} (or TC-ID set for short) if the set of vertices of the subgraph induced by $V-D$ is independent and not empty\footnote{Notice that the condition of $V-D$ to be not empty is not exactly necessary. However, if such condition is not required, then we readily seen that the only graphs containing a TC-ID set of minimum cardinality with empty complement are the union of paths $P_2$.}. The minimum cardinality of any TC-ID set is the \emph{total co-independent domination number} of $G$ and is denoted by $\gamma_{t,coi}(G)$. A TC-ID set of cardinality $\gamma_{t,coi}(G)$ is a $\gamma_{t,coi}(G)$-\emph{set}. These concepts were previously introduced and barely studied in \cite{Soner2012}. Moreover, in \cite{Marcin-K}, the same parameter was introduced under the name of total outer-independent domination number. Since this article (\cite{Marcin-K}) is not published in any journal and the other one (\cite{Soner2012}) is already published, we precisely follow the terminology and notation of the latter. Since total domination is not defined for graphs having isolated vertices, all the graphs considered herein have not isolated vertices. Moreover, in order to satisfy the total domination property and that the complement of a TC-ID set is not empty, it is required that $2\le \gamma_{t,coi}(G)\le n-1$, if $n$ is the order of $G$. Such trivial bounds were already noted in the seminal work \cite{Soner2012}.

Throughout this work we consider $G=(V,E)$ as a simple graph of order $n$ and size $m$. That is, graphs that are finite, undirected, and without loops or multiple edges. Given a vertex $v$ of $G$, $N_G(v)$ represents the \emph{open neighborhood} of $v$, \emph{i.e.}, the set of all neighbors of $v$ in $G$ and the \emph{degree} of $v$ is $\delta(v) = |N_G(v)|$. The \emph{minimum} and \emph{maximum degrees} of $G$ are denoted by $\delta(G)$ and $\Delta(G)$, respectively (or $\delta$ and $\Delta$, respectively, for short). If $X$ and $Y$ are two subsets of $V(G)$, then we denote the set of all edges of $G$ joining a vertex of $X$ with a vertex of $Y$ by $E(X,Y)$. For a set $S\subset V(G)$, the \emph{complement} of $S$ is $\overline{S}=V(G)\setminus S$. In this work, we represent an edgeless graph $G$ of order $n$ as $N_n$. For any other graph theory terminology and notation we follow the book  \cite{book-total-dom}.

Let $T$ be a tree (a connected graph without cycles). A \emph{leaf} or a \emph{pendant vertex} of $T$ is a vertex of degree one (it is similarly defined for non tree graphs). A \emph{support vertex} of $T$ is a vertex adjacent to a leaf and a \emph{semi-support vertex} is a vertex adjacent to a support vertex that is not a leaf. By an \emph{isolated support vertex} of $T$ we mean an isolated vertex of the subgraph induced by the support vertices of $T$. The set of leaves of $T$ is denoted by $L(T)$, the set of support vertices by $S(T)$, and the set of semi-support vertices by $SS(T)$. Moreover, $S^\ast(T)$ is the set of isolated support vertices of $T$.

We first notice that if $H_1$, $H_2$, \ldots , $H_r$ with $r\ge 2$, are the connected components of a graph $H$, then any TC-ID set of minimum cardinality in $H$ is formed by a minimum total dominating set in the subgraphs $H_j$ where $|V(H_j)|=2$ and a minimum TC-ID set in the remaining subgraphs $H_i$ with $|V(H_i)|\ge 3$. That is stated in the following result.

\begin{remark}
Let $H_1$, $H_2$, \ldots , $H_r$ with $r\ge 2$, be the connected components of a graph $H$ different from the union of $r$ copies of the path $P_2$. Then $$\gamma_{t,coi}(H)=\sum_{\substack{i\in \{1, \ldots, r\}\\ |V(H_i)|=2}}\gamma_t(H_i) + \sum_{\substack{j\in \{1, \ldots, r\}\\ |V(H_j)|\geq 3}}\gamma_{t,coi}(H_j).$$
\end{remark}

\begin{proof}
Let $D_j$ be a $\gamma_{t,coi}(H_j)$-set for $j\in\{1, \ldots, r\}$ such that $|V(H_j)|\geq 3$. It is easy see that $(\bigcup_{\substack{i\in \{1, \ldots, r\}\\ |V(H_i)|=2}}V(H_i))\cup(\bigcup_{\substack{j\in \{1, \ldots, r\}\\ |V(H_j)|\geq 3}}D_j)$ is a TC-ID set of $H$ and we have

$$\gamma_{t,coi}(H)\leq \sum_{\substack{i\in \{1, \ldots, r\}\\ |V(H_i)|=2}}\gamma_t(H_i) + \sum_{\substack{j\in \{1, \ldots, r\}\\ |V(H_j)|\geq 3}}\gamma_{t,coi}(H_j).$$

On the other hand, let $A$ be a $\gamma_{t,coi}(H)$.  Firstly, we observe that for every $i\in \{1, \ldots, r\}$ such that $|V(H_i)|=2$, it is satisfied that $A\cap V(H_i)=V(H_i)$. Moreover, let $A_j=A\cap V(H_j)$ for every $j\in \{1, \ldots, r\}$ such that $|V(H_j)|\geq 3$. We notice that every $A_j$ must be a TC-ID set of $H_j$. In this sense,
\begin{align*}
\gamma_{t,coi}(H)=|A|&=\sum_{\substack{i\in \{1, \ldots, r\}\\ |V(H_i)|=2}}|V(H_i)|+\sum_{\substack{j\in \{1, \ldots, r\}\\ |V(H_j)|\geq 3}}|A_j|\\
&\ge \sum_{\substack{i\in \{1, \ldots, r\}\\ |V(H_i)|=2}}\gamma_t(H_i)+\sum_{\substack{j\in \{1, \ldots, r\}\\ |V(H_j)|\geq 3}}\gamma_{t,coi}(H_j),
\end{align*}
which completes the proof.
\end{proof}

In concordance with the result above, from now on, we only consider the study of the TC-ID sets of connected graphs and omit to refer to that fact throughout all our exposition.

\section{Complexity of the decision problem}

We begin our exposition by considering the problem of deciding whether the total co-independent domination number of a graph is less than a given integer. That is stated in the following decision problem.

$$\begin{tabular}{|l|}
  \hline
  \mbox{TOTAL CO-INDEPENDENT DOMINATION PROBLEM}\\
  \mbox{INSTANCE: A non trivial graph $G$ and a positive integer $r$}\\
  \mbox{PROBLEM: Deciding whether $\gamma_{t,coi}(G)$ is less than $r$}\\
  \hline
\end{tabular}$$

In order to deal with the complexity of the TOTAL CO-INDEPENDENT DOMINATION PROBLEM (TC-ID PROBLEM), we make a reduction from a very well known decision problem concerning the independence number of graphs.

$$\begin{tabular}{|l|}
  \hline
  \mbox{MAXIMAL INDEPENDENT SET PROBLEM}\\
  \mbox{INSTANCE: A non trivial graph $G$ and a positive integer $r$}\\
  \mbox{PROBLEM: Deciding whether the independence number of $G$ is larger than $r$}\\
  \hline
\end{tabular}$$

The problem above is one of the classical NP-complete problems appearing in the book \cite{garey}. Moreover, it remains NP-complete even when restricted to planar graphs.

\begin{theorem}{\em \cite{garey}}
MAXIMAL INDEPENDENT SET PROBLEM is NP-complete even when restricted to planar graphs of maximum degree at most 3.
\end{theorem}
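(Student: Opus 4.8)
The plan is to verify the two halves of NP-completeness separately, putting almost all of the effort into engineering the planarity and degree constraints by local gadget substitutions. Membership in NP is immediate: a nondeterministic algorithm guesses a vertex subset $S\subseteq V(G)$ and checks in polynomial time that no two vertices of $S$ are adjacent and that $|S|>r$, so such an $S$ is a polynomially verifiable certificate that $\beta(G)>r$.

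For NP-hardness I would start from the unrestricted problem and then tighten the admissible instances in stages. Throughout it is convenient to recall that $S$ is independent in $G$ if and only if $\overline{S}$ is a vertex cover, so that $\beta(G)>r$ precisely when $G$ has a vertex cover of size less than $n-r$; I would pass between the two equivalent formulations depending on which is cleaner for a given gadget. As the base reduction I would use the classical transformation from $3$-SAT: for a formula with clauses $C_1,\dots,C_m$ build one triangle per clause (a vertex for each literal occurrence) and add an edge between every pair of vertices labelled by a variable and its negation. Since each triangle contributes at most one vertex to any independent set, this graph has an independent set of size $m$ if and only if the formula is satisfiable, which settles NP-completeness of the general (unrestricted) problem.

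To force planarity I would fix an arbitrary plane drawing of the base graph and replace each edge crossing by a planar \emph{crossover gadget}: a constant-size planar graph with four terminals whose internal structure guarantees that, in every maximum independent set, the selection pattern on the two terminal pairs behaves exactly as if two non-interacting edges merely crossed. Substituting such a gadget at each of the polynomially many crossings yields a planar graph $G'$ whose independence number differs from $\beta(G)$ by a fixed, precomputable additive amount per gadget, and the target threshold $r$ is shifted accordingly. (Alternatively, one could reduce from \textsc{Planar} $3$-SAT and route the variable-consistency edges planarly, avoiding explicit crossovers at the cost of invoking a deeper base result.)

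Finally, to cap the maximum degree at $3$, I would replace each vertex $v$ of degree $d\ge 4$ by a degree-reduction gadget — for example a suitable cycle or tree on $\Theta(d)$ new vertices, each of degree at most $3$, with the former neighbors of $v$ attached around it — designed so that the gadget admits exactly two stable ``states'' (corresponding to $v$ lying in or out of the independent set) and contributes a fixed known constant to the independence number. Iterating over all high-degree vertices and composing the three transformations produces a polynomial-time many-one reduction to planar graphs of maximum degree at most $3$. The hard part will be the last two stages: one must exhibit explicit planar, degree-bounded gadgets and prove an exact correspondence between the maximum independent sets of the modified and original graphs, maintaining a precise ledger of how each substitution shifts $r$, since even a single miscounted vertex would invalidate the reduction.
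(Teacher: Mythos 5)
First, a point of comparison: the paper does not prove this statement at all --- it is quoted verbatim from the reference \cite{garey} and serves only as the known source problem for the paper's own reduction (the NP-completeness of the TC-ID PROBLEM). So there is no ``paper proof'' to measure you against; what you are reconstructing is the literature proof (due essentially to Garey, Johnson and Stockmeyer), and your outline does follow that standard route: NP membership by certificate, the 3-SAT clause-triangle reduction for the unrestricted problem, crossover gadgets for planarity, and local substitution gadgets for the degree bound.

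That said, as a proof your proposal has a genuine gap, and it is exactly where you flag it: both the planar crossover gadget and the degree-reduction gadget are \emph{postulated} to exist with the required properties, but never exhibited, and their existence is the entire content of the restricted-case theorem. For independent set (equivalently vertex cover, via $\beta(G)+\alpha(G)=n$), a crossover gadget whose maximum independent sets simulate two non-interacting crossing edges with a fixed additive offset is not obvious a priori --- the known one is a specific constant-size graph with a delicate case analysis, and an ``obvious'' candidate can easily fail to enforce independence of the two terminal pairs' selection patterns. The degree-reduction step can be made concrete (replace a vertex $v$ of degree $d$ by an even cycle $C_{2d}$, attach the former neighbors to alternate cycle vertices, and note that $\beta(C_{2d})=d$ with exactly two maximum independent sets playing the roles of ``$v$ in'' and ``$v$ out''), but you would still need to verify that this substitution preserves planarity of the chosen embedding and to keep the exact ledger of how $r$ shifts. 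Until those two gadgets are written down and their correspondence lemmas proved, what you have is a correct plan rather than a proof; alternatively, your parenthetical suggestion of reducing from PLANAR 3-SAT removes the crossover gadget but replaces it with reliance on a deeper base result, so the burden does not disappear, it only moves.
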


Now on, in order to present our complexity results we need to introduce a family of graphs which is next defined. Let $T_6$ be a tree with six vertices having two adjacent vertices $u,v$ of degree three and the other four $u_1,u_2,v_1,v_2$ vertices are leaves. Clearly, each vertex of degree three has two adjacent leaves, say $u_1,u_2\in N(u)$ and $v_1,v_2\in N(v)$ (see Figure \ref{G-T-6} (I)). Given a graph $G$ of order $n$ and $n$ trees $T_6^{(1)},\dots,T_6^{(n)}$ isomorphic to the tree $T_6$, the graph $G_T$ is constructed by adding edges between the $i^{th}$-vertex of $G$ and the vertex $u$ of the $i^{th}$-tree $T_6^{(i)}$. See Figure \ref{G-T-6} (II) for an example.

\begin{figure}[h]
\centering
\begin{tikzpicture}[scale=.8, transform shape]

\node [draw, shape=circle] (t1) at  (-4,1) {};
\node [draw, shape=circle] (t2) at  (-4.5,2) {};
\node [draw, shape=circle] (t3) at  (-3,2) {};
\node [draw, shape=circle] (t4) at  (-3.5,3) {};
\node [draw, shape=circle] (t5) at  (-4,4) {};
\node [draw, shape=circle] (t6) at  (-2.5,4) {};

\draw(t2)--(t1)--(t3);
\draw(t5)--(t4)--(t6);
\draw(t4)--(t1);

\node [below] at (-4,0.7) {$u$};
\node [right] at (-4.2,3) {$v$};
\node [right] at (-2.8,2) {$u_2$};
\node [right] at (-2.3,4) {$v_2$};
\node [left] at (-4.7,2) {$u_1$};
\node [left] at (-4.2,4) {$v_1$};

\node [left] at (-2,0.5) {\large (I)};
\node [left] at (12,0.5) {\large (II)};

\node [draw, shape=circle] (a1) at  (0,0) {};
\node [draw, shape=circle] (a2) at  (3,0) {};
\node [draw, shape=circle] (a3) at  (6,0) {};
\node [draw, shape=circle] (a4) at  (9,0) {};
\node [draw, shape=circle] (a5) at  (1,1) {};
\node [draw, shape=circle] (a6) at  (4,1) {};
\node [draw, shape=circle] (a7) at  (7,1) {};
\node [draw, shape=circle] (a8) at  (10,1) {};
\node [draw, shape=circle] (a9) at  (2,2) {};
\node [draw, shape=circle] (a10) at  (5,2) {};
\node [draw, shape=circle] (a11) at  (8,2) {};
\node [draw, shape=circle] (a12) at  (11,2) {};
\node [draw, shape=circle] (b9) at  (0.5,2) {};
\node [draw, shape=circle] (b10) at  (3.5,2) {};
\node [draw, shape=circle] (b11) at  (6.5,2) {};
\node [draw, shape=circle] (b12) at  (9.5,2) {};
\node [draw, shape=circle] (b13) at  (1.5,3) {};
\node [draw, shape=circle] (b14) at  (4.5,3) {};
\node [draw, shape=circle] (b15) at  (7.5,3) {};
\node [draw, shape=circle] (b16) at  (10.5,3) {};
\node [draw, shape=circle] (a17) at  (1,4) {};
\node [draw, shape=circle] (a18) at  (4,4) {};
\node [draw, shape=circle] (a19) at  (7,4) {};
\node [draw, shape=circle] (a20) at  (10,4) {};
\node [draw, shape=circle] (b17) at  (2.5,4) {};
\node [draw, shape=circle] (b18) at  (5.5,4) {};
\node [draw, shape=circle] (b19) at  (8.5,4) {};
\node [draw, shape=circle] (b20) at  (11.5,4) {};

\draw(a9)--(a5)--(a1)--(a2)--(a3)--(a4)--(a8)--(a12);
\draw(a2)--(a6)--(a10);
\draw(a3)--(a7)--(a11);
\draw(b13)--(a5)--(b9);
\draw(b14)--(a6)--(b10);
\draw(b15)--(a7)--(b11);
\draw(b16)--(a8)--(b12);
\draw(a17)--(b13)--(b17);
\draw(a18)--(b14)--(b18);
\draw(a19)--(b15)--(b19);
\draw(a20)--(b16)--(b20);

\draw (a1) .. controls (0.3,-0.5) and (5.7,-0.5) .. (a3);
\draw (a2) .. controls (3.3,-0.5) and (8.7,-0.5) .. (a4);
\end{tikzpicture}
\caption{The graph $T_6$ (I) and a graph $G_T$ (II) where $G$ is a complete graph minus one edge.}
\label{G-T-6}
\end{figure}
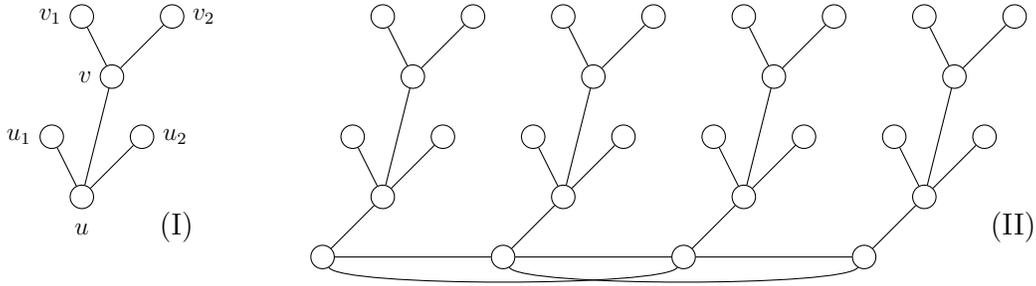

We are now able to prove the NP-completeness of the TC-ID PROBLEM.

\begin{theorem}
TOTAL CO-INDEPENDENT DOMINATION PROBLEM is NP-complete even when restricted to planar graphs of maximum degree at most 3.
\end{theorem}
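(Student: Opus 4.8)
The plan is to prove membership in NP and then exhibit a polynomial reduction from the MAXIMAL INDEPENDENT SET PROBLEM using the transformation $G\mapsto G_T$ already defined. Membership in NP is immediate: a total co-independent dominating set $D$ serves as a polynomial-size certificate, and one checks in polynomial time that $|D|<r$, that every vertex has a neighbor in $D$, that $V(G_T)-D$ spans no edge, and that $V(G_T)-D\neq\emptyset$. For hardness I would start from an instance $(G,r)$ of independent set on a planar graph $G$ of maximum degree at most $3$, write $V(G)=\{g_1,\dots,g_n\}$, build $G_T$ by attaching to each $g_i$ a copy $T_6^{(i)}$ with vertices $u^{(i)},v^{(i)},u_1^{(i)},u_2^{(i)},v_1^{(i)},v_2^{(i)}$ together with the edge $g_iu^{(i)}$, and output the instance $(G_T,\,3n-r)$ of the TC-ID PROBLEM (we may assume $r\le n$, as otherwise the input is a trivial NO-instance).

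The heart of the argument is the identity
$$\gamma_{t,coi}(G_T)=3n-\beta(G),$$
which I would establish by matching bounds. For the lower bound, observe that in each copy the leaves $u_1^{(i)},u_2^{(i)}$ have $u^{(i)}$ as their \emph{only} neighbor and $v_1^{(i)},v_2^{(i)}$ have $v^{(i)}$ as their only neighbor, so total domination forces $u^{(i)},v^{(i)}\in D$ for every $i$, contributing $2n$ vertices. Moreover every $g_i\notin D$ lies in the independent set $V(G_T)-D$, and since the only neighbor of $g_i$ outside $G$ is $u^{(i)}\in D$, the set $\{g_i:g_i\notin D\}$ is independent in $G$; hence at most $\beta(G)$ of the $g_i$ can be omitted, giving $|D|\ge 2n+(n-\beta(G))=3n-\beta(G)$. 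For the upper bound I would take a $\beta(G)$-set $I$ and set $D=\{u^{(i)},v^{(i)}:1\le i\le n\}\cup(V(G)\setminus I)$, so $|D|=3n-\beta(G)$; each leaf is dominated by its support, each $u^{(i)}$ and $v^{(i)}$ dominate one another, each $g_i$ is dominated by $u^{(i)}$, and $V(G_T)-D=I\cup\{\text{all leaves}\}$ is non-empty and independent (the leaves are mutually non-adjacent and each is adjacent only to a vertex of $D$, while $I$ is independent in $G$). Thus $D$ is a TC-ID set and the equality follows.

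With the identity in hand the reduction closes at once: $\beta(G)>r$ if and only if $3n-\beta(G)<3n-r$, that is, if and only if $\gamma_{t,coi}(G_T)<3n-r$, so $(G,r)$ is a YES-instance of independent set exactly when $(G_T,3n-r)$ is a YES-instance of the TC-ID PROBLEM. The construction is computable in polynomial time, and since the attached trees can be drawn inside the faces incident with each $g_i$, planarity is preserved; one also checks the maximum-degree condition directly from the bounded-degree gadget. The main obstacle I anticipate is the lower bound in the displayed identity: one must argue rigorously that no saving arises from inserting leaves into $D$ or from a cleverer choice of which $g_i$ to retain, i.e. that the $2n$ support vertices are genuinely forced and that the omitted $g_i$ can never exceed an independent set of $G$. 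The remaining ingredients (NP membership, polynomiality, planarity, and the final equivalence) are routine once this equality is secured.
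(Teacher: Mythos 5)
Your proposal is correct and follows essentially the same route as the paper's proof: NP membership via certificate checking, the same $T_6$-gadget construction of $G_T$, the same key identity $\gamma_{t,coi}(G_T)=3n-\beta(G)$ established by matching upper and lower bounds, and the same closing equivalence between the two decision problems. The only differences are cosmetic (you force $u^{(i)},v^{(i)}\in D$ directly from the leaves, where the paper only counts $|D'\cap V(T_6^{(i)})|\ge 2$ per copy, and you track the strict inequalities of the stated problems explicitly); note that both your argument and the paper's leave the maximum-degree claim unexamined, since the added edges $g_iu^{(i)}$ give $G_T$ vertices of degree $4$, so as written both reductions literally establish hardness only for planar graphs of maximum degree at most $4$.
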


\begin{proof}
The problem is clearly in NP since verifying that a given set is indeed a TC-ID set can be done in polynomial time. Let us now make a reduction from the MAXIMAL INDEPENDENT SET PROBLEM. Let $G$ be a not edgeless graph of order $n$ and construct the graph $G_T$ as described above. Let us denote by $u^{(i)}, v^{(i)}$ the vertices of degree three in the $i^{th}$ copy $T_6^{(i)}$ of the tree $T_6$ used to generate $G_T$. We shall prove that $\gamma_{t,coi}(G_T)=3n-\beta(G)$.

Let $A$ be a $\beta(G)$-set and let $D$ be the set of vertices of $G_T$ obtained from the complement of $A$ in $G$ together with the vertices $u,v$ belonging to all the copies of the tree $T_6$ used to generate $G_T$, that is $D=(V(G)\setminus A)\cup \left\{\bigcup_{i=1}^n\{u^{(i)},v^{(i)}\}\right\}$. This set is clearly a total dominating set and its complement is an independent set. Thus, $D$ is a TC-ID set in $G_T$ and, as a consequence, $$\gamma_{t,coi}(G_T)\le n-|A|+\left|\bigcup_{i=1}^n\{u^{(i)},v^{(i)}\}\right|=3n-\beta(G).$$
On the other hand, let $D'$ be a $\gamma_{t,coi}(G_T)$-set. In order to totally dominate the leaves of every copy of $T_6$ in $G_T$, it must happen that $|D'\cap V(T_6^{(i)})|\ge 2$ for every $i\in\{1,\dots,n\}$. Moreover, $V(G)\cap D'\ne \emptyset$, since otherwise the complement of $D'$ would not be independent. Moreover the complement of $V(G)\cap D'$ in $G$ is an independent set in $G$. Thus, $\beta(G)\ge n-|V(G)\cap D'|$ and we obtain the following.
$$\gamma_{t,coi}(G_T)= |D'|= |D'\cap V(G)| + \left|\bigcup_{i=1}^n \left(D'\cap V(T_6^{(i)})\right)\right|\ge n-\beta(G)+2n=3n-\beta(G).$$
As a consequence, it follows that $\gamma_{t,coi}(G_T)=3n-\beta(G)$.

Now, for $j=3n-k$, it is readily seen that $\gamma_{t,coi}(G_T)\leq j$ if and only if $\beta(G)\geq k$, which complete the reduction. We also observe that, if $G$ is a planar graph, then $G_T$ is also planar. Therefore, since the MAXIMAL INDEPENDENT SET PROBLEM is NP-complete even when restricted to planar graphs of maximum degree at most 3, we also deduce that the TC-ID PROBLEM is NP-complete even when restricted to planar graphs of maximum degree at most 3 and the proof is completed.
\end{proof}

As a consequence of the result above, we deduce the following consequence.

\begin{corollary}
The problem of computing the total co-independent domination number of graphs is NP-hard even when restricted to planar graphs of maximum degree at most 3.
\end{corollary}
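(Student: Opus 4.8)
The plan is to derive the NP-hardness of this optimization problem directly from the NP-completeness of the associated decision problem (the TC-ID PROBLEM) established in the preceding theorem. The essential observation is that any efficient procedure for computing the exact value of $\gamma_{t,coi}(G)$ would immediately yield an efficient decision procedure, so hardness of deciding transfers to hardness of computing.

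First I would argue via a Turing (oracle) reduction. Suppose, toward a contradiction, that there were a polynomial-time algorithm $\mathcal{A}$ that, on input a graph $G$, outputs the integer $\gamma_{t,coi}(G)$. Given any instance $(G,r)$ of the TC-ID PROBLEM, one would run $\mathcal{A}$ on $G$ to obtain $\gamma_{t,coi}(G)$ in polynomial time, and then simply compare this value with $r$: the answer to the decision question ``is $\gamma_{t,coi}(G)<r$?'' is ``yes'' precisely when the value returned by $\mathcal{A}$ is less than $r$. This comparison costs only constant additional time beyond the single call to $\mathcal{A}$, so the entire decision procedure would run in polynomial time.

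Next I would verify that the structural restrictions are preserved throughout. Since the reduction used in the proof of the preceding theorem produces, from a planar graph $G$ of maximum degree at most $3$, only graphs $G_T$ that are again planar of maximum degree at most $3$, the hypothetical algorithm $\mathcal{A}$ would in particular decide the TC-ID PROBLEM on exactly this restricted class in polynomial time. But that theorem asserts the TC-ID PROBLEM to be NP-complete even on planar graphs of maximum degree at most $3$; hence the existence of such $\mathcal{A}$ would force $\mathrm{P}=\mathrm{NP}$. Therefore no such polynomial-time algorithm exists unless $\mathrm{P}=\mathrm{NP}$, which is precisely the statement that computing $\gamma_{t,coi}(G)$ is NP-hard on this class.

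I do not anticipate any genuine obstacle, since this is the routine passage from the decision version to the optimization version. The only point requiring mild care is to confirm that the family of graphs witnessing hardness (planar, maximum degree at most $3$) coincides with the class to which the optimization hardness is attributed; this is immediate from the explicit construction of $G_T$ in the proof of the preceding theorem, where planarity and the degree bound are both shown to be inherited from $G$.
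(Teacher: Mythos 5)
Your proposal is correct and matches the paper's intent: the paper states this corollary as an immediate consequence of the NP-completeness theorem, and your spelled-out argument (a polynomial-time algorithm computing $\gamma_{t,coi}$ would decide the TC-ID PROBLEM on the same restricted class by a single comparison, contradicting its NP-completeness unless $\mathrm{P}=\mathrm{NP}$) is precisely the standard decision-to-computation transfer the authors leave implicit. Your extra verification that the reduction preserves planarity and the degree bound is not even needed, since the corollary follows from the theorem's statement alone rather than from re-examining its proof.
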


\section{Bounding the total co-independent domination number}

In order to present the first bounds for $\gamma_{t,coi}(G)$ of any graph $G$, we need the next concepts. A set $S$ of vertices of $G$ is a \emph{vertex cover} of $G$ if every edge of $G$ is incident with at least one vertex of $S$. The \emph{vertex cover number} of $G$, denoted by $\alpha(G)$, is the smallest cardinality of a vertex cover of $G$. We refer to an $\alpha(G)$-set in $G$ as a vertex cover of cardinality $\alpha(G)$. The following well-known result, due to Gallai \cite{Gallai1959}, states the relationship between the independence number and the vertex cover number of a graph.

\begin{theorem}{\em\cite{Gallai1959}}{\rm (Gallai, 1959)}\label{th gallai}
For any graph $G$ of order $n$, $\alpha(G)+\beta(G) = n.$
\end{theorem}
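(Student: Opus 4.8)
The plan is to exploit the duality between vertex covers and independent sets via set complementation. The central observation I would establish first is that a set $S\subseteq V(G)$ is a vertex cover of $G$ if and only if its complement $\overline{S}=V(G)\setminus S$ is an independent set. This follows immediately from the definitions: $S$ is incident with every edge exactly when no edge has both of its endpoints lying outside $S$, which is precisely the condition that $\overline{S}$ induces an edgeless graph. Thus complementation gives a bijection between the vertex covers of $G$ and the independent sets of $G$.

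With this equivalence in hand, I would derive the two opposite inequalities that together yield the claimed equality. First, I would take $S$ to be an $\alpha(G)$-set, i.e.\ a minimum vertex cover. Its complement $\overline{S}$ is then an independent set, so $\beta(G)\ge|\overline{S}|=n-\alpha(G)$, which gives $\alpha(G)+\beta(G)\ge n$. Second, I would take $I$ to be a $\beta(G)$-set, i.e.\ a maximum independent set. Its complement $\overline{I}$ is then a vertex cover, so $\alpha(G)\le|\overline{I}|=n-\beta(G)$, which gives $\alpha(G)+\beta(G)\le n$. Combining the two inequalities yields $\alpha(G)+\beta(G)=n$.

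I do not anticipate any genuine obstacle, since the whole argument rests on the complementary characterization of the two parameters, which is transparent from the definitions. The only point requiring care is to keep the direction of each inequality straight when passing between a set and its complement, and to note that the minimum over covers corresponds to the maximum over independent sets precisely because complementation reverses the order of cardinalities.
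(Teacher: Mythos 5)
Your proof is correct: the complementation argument (a set $S$ is a vertex cover if and only if $V(G)\setminus S$ is independent, applied in both directions to get the two opposite inequalities) is exactly the classical proof of Gallai's identity. The paper states this result only as a citation to Gallai's 1959 paper and gives no proof of its own, so there is nothing in the paper for your argument to diverge from; what you wrote is the standard and complete justification.
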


On the other hand, it was shown in \cite{Soner2012} the following relationship between $\gamma_{t,coi}(G)$ and $\beta(G)$.

\begin{theorem}{\em\cite{Soner2012}}\label{th-indep-tcid}
For any graph $G$ of order $n$, $\gamma_{t,coi}(G)\ge n-\beta(G).$
\end{theorem}

By using the two theorems above, we can easily deduce the lower bound of our next result. However, an upper bound for $\gamma_{t,coi}(G)$ in terms of the vertex cover number can also be deduced. We first consider the case whether $G$ is a star graph $S_n$ for which is known that $\gamma_{t,coi}(S_n)=2$ and $\alpha(S_n)=1$.

\begin{remark}
For any star graph $S_n$, $\gamma_{t,coi}(S_n)=2=2\alpha(S_n).$
\end{remark}

In concordance with the remark above, for our next result we exclude the case of star graphs and see that they behave in a different manner.

\begin{theorem}\label{th-cover-tcid}
For any graph $G$ of order $n$ different from a star graph,
$$\alpha(G)\le \gamma_{t,coi}(G)\le 2\alpha(G)-1.$$
\end{theorem}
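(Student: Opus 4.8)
The plan is to establish the two inequalities separately. The lower bound is immediate: combining $\gamma_{t,coi}(G)\ge n-\beta(G)$ from Theorem~\ref{th-indep-tcid} with Gallai's identity $\alpha(G)+\beta(G)=n$ from Theorem~\ref{th gallai} gives $\gamma_{t,coi}(G)\ge n-\beta(G)=\alpha(G)$, with no case analysis needed.

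For the upper bound I would exploit the fact that the complement of any TC-ID set is independent, so every TC-ID set is in particular a vertex cover; this motivates building one by augmenting a minimum vertex cover. Let $S$ be an $\alpha(G)$-set, so that $\overline{S}$ is independent, and let $I\subseteq S$ consist of the vertices having no neighbour inside $S$ (equivalently, those isolated in the subgraph induced by $S$). Each vertex of $I$ has all of its neighbours in $\overline{S}$, and at least one such neighbour since $G$ has no isolated vertices. I would then choose $W\subseteq\overline{S}$ so that every vertex of $I$ has a neighbour in $W$, and set $D=S\cup W$. A routine check shows that $D$ totally dominates $G$ (vertices of $S\setminus I$ are dominated inside $S$, those of $I$ by $W$, and those of $\overline{S}$ by $S$, using that $\overline{S}$ is independent and $G$ has no isolated vertices), while $\overline{D}=\overline{S}\setminus W$ remains independent; hence $D$ is a TC-ID set provided $\overline{D}\ne\emptyset$.

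The heart of the argument is to bound $|W|$ by $\alpha(G)-1$. Taking one neighbour per vertex of $I$ gives $|W|\le|I|$. If $S$ is not independent then $I\subsetneq S$, so $|W|\le|I|\le\alpha(G)-1$ at once. The delicate case is $S$ independent, where $I=S$ and $G$ is bipartite with parts $S$ and $\overline{S}$; here the hypothesis that $G$ is not a star becomes essential, for if every vertex of $\overline{S}$ had degree one then, $S$ being independent and $G$ connected, $G$ would be forced to be a single star. Hence some $t^\ast\in\overline{S}$ has degree at least two; including $t^\ast$ in $W$ to cover its (at least two) neighbours in $S$ and adding one neighbour for each of the remaining at most $|S|-2$ vertices yields $|W|\le 1+(|S|-2)=\alpha(G)-1$. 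Either way $|D|=\alpha(G)+|W|\le 2\alpha(G)-1$.

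It remains to guarantee $\overline{D}\ne\emptyset$, which I would handle by splitting on the size of $\beta(G)$. If $\beta(G)\ge\alpha(G)$ then $|\overline{S}|=\beta(G)\ge\alpha(G)>\alpha(G)-1\ge|W|$, so $\overline{D}=\overline{S}\setminus W\ne\emptyset$ and the construction above finishes the proof. If instead $\beta(G)\le\alpha(G)$, I would discard the vertex-cover set and simply take $D=V(G)\setminus\{v\}$, choosing $v$ to be a leaf when $G$ has one and any vertex otherwise, so that $V(G)\setminus\{v\}$ remains a total dominating set and is therefore a TC-ID set of size $n-1=\alpha(G)+\beta(G)-1\le 2\alpha(G)-1$. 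I expect the main obstacle to be precisely this tension within the upper bound: the vertex-cover construction controls the cardinality but can consume all of $\overline{S}$ when $\beta(G)$ is small, so reaching $2\alpha(G)-1$ \emph{and} keeping the complement nonempty is exactly where the non-star hypothesis and the complementary $V(G)\setminus\{v\}$ argument must be invoked.
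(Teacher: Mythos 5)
Your proof is correct, and while it shares the paper's overall skeleton, the key construction is genuinely different. The lower bound is obtained exactly as in the paper, from Theorem~\ref{th gallai} and Theorem~\ref{th-indep-tcid}, and your dichotomy $\beta(G)\ge\alpha(G)$ versus $\beta(G)\le\alpha(G)$ coincides, via Gallai, with the paper's split $\alpha(G)<n/2$ versus $\alpha(G)\ge n/2$, with both proofs falling back on the trivial bound $\gamma_{t,coi}(G)\le n-1$ in the latter case. The divergence is in how the minimum vertex cover is augmented in the main case. The paper picks two cover vertices $u,v$ at minimum mutual distance (necessarily $1$ or $2$, because $C$ is a cover), inserts a shortest $u$--$v$ path, and then adds one private neighbour for \emph{every} remaining vertex of $C$; the saving of one vertex comes from $u$ and $v$ sharing that path, and the non-star hypothesis enters only implicitly, to guarantee $|C|\ge 2$ so that such a pair exists. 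You instead add helpers only for the cover vertices that are isolated in the subgraph induced by $S$: when $S$ is not independent the bound $|W|\le|S|-1$ is automatic, and when $S$ is independent you invoke the non-star hypothesis (together with connectivity) to produce a vertex of $\overline{S}$ of degree at least two that serves two cover vertices at once. Your route buys two things: the non-emptiness of the complement is verified explicitly (via $|W|\le\alpha(G)-1<\beta(G)=|\overline{S}|$), a point the paper leaves implicit in the inequality $2\alpha(G)-1<n$; and your non-independent case is in substance the paper's later Lemma~\ref{no-independent} (a non-independent $\alpha(G)$-set even yields $\gamma_{t,coi}(G)\le 2\alpha(G)-2$), so your case analysis anticipates the machinery the paper develops for the extremal characterization. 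What the paper's version buys in exchange is uniformity and brevity: a single construction handles every minimum cover, with no case distinction on whether it is independent.
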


\begin{proof}
The lower bound follows from Theorems \ref{th gallai} and \ref{th-indep-tcid}. If $\alpha(G)\ge n/2$, then $\gamma_{t,coi}(G)\le n-1\le 2(n/2)-1\le 2\alpha(G)-1$. Thus, from now on in this proof we consider $\alpha(G) < n/2$. Now, let $C$ be an $\alpha(G)$-set.

We choose two vertices $u,v\in C$ with the minimum possible distance between them and let $P$ be a shortest $u-v$ path. Clearly, $V(P)\cap C=\{u,v\}$ and the distance between $u$ and $v$ is one or two (notice this also means $2\le |V(P)|\le 3$). For each vertex $x\in C -\{u, v\}$, choose
a neighbor $x'$ of $x$. Then $C\cup V(P)\cup \{x' : x\in C-\{u, v\}\}$ is a TC-ID set of cardinality $2|C| + |V(P)|- 4\le 2\alpha(G)- 1$, which completes the proof of the upper bound.
\end{proof}

The bounds above are tight. For instance, a characterization of that trees achieving the equality in the lower bound was given in \cite{Cabrera2017} (note that in \cite{Cabrera2017} the trees $T$ of order $n$ satisfying equality in the bound $\gamma_{t,coi}(T)=n-\beta(T)$ were characterized, which equals the lower bound of Theorem \ref{th-cover-tcid}, in concordance with Theorem \ref{th gallai}). The upper bound is attained for an infinite family of graphs, as we next show. To this end, we need the following operations for edges or induced paths $P_3$ of a graph $G$.\\
{\bf\em Subdivision:} Given an edge $uv$, remove the edge, add a vertex $w$ and the edges $uw$, $wv$.\\
{\bf\em Inflation of size $k$:} Given an induced path $P_3=uvw$ of $G$, in which $v$ has degree two, remove the vertex $v$ and the two incident edges, and replace them with $k$ vertices $v_1,v_2,\dots,v_k$ and edges $uv_i, v_iw$ for every $i\in \{1,\dots,k\}$.\\
{\bf\em Addition of $t$ pendant vertices:} Given a vertex $x$ add $t$ new vertices $y_1,\dots,y_t$ and the edges $xy_i$ for every $i\in \{1,\dots,t\}$.\\

Now, a graph $H_{n,a,b}\in \mathcal{F}_1$ is a graph obtained from a star graph $S_n$ by making the following sequence of operations, which we will call as \textbf{Sequence I}.
\begin{itemize}
  \item[(a)] Apply the operation ``\emph{Subdivision}'' to $a$ ($1\le a\le n$) edges of $S_n$.
  \item[(b)] Apply the operation ``\emph{Inflation of size $k_i$}'' with $k_i\ge 2$ to $b$ ($0\le b\le a$) paths $P_3^{(i)}$ obtained from (a).
  \item[(c)] Apply the operation ``\emph{Addition of $q_i$ pendant vertices}'', $q_i\ge 0$, to the $b$ vertices corresponding to leaves of $S_n$ obtained in the step (b).
  \item[(d)] Apply the operation ``\emph{Addition of $t_i$ pendant vertices}'', $t_i\ge 1$, to the leaves $v_i$ belonging to the remaining $a-b$ paths obtained from (a), which were not ``inflated'' in (b).
  \item[(e)] If $a=n$ and $b=0$ (notice that in this case $H_{n,a,b}$ is a tree such that the central vertex of the original star graph $S_n$ has no adjacent leaves), then apply the operation ``\emph{Addition of $t$ pendant vertices}'', $t\ge 1$, to the vertex corresponding to the central vertex of $S_n$.
  \item[(f)] If $a=n$ and $b>0$, then apply the operation ``\emph{Addition of $t$ pendant vertices}'', $t\ge 0$, to the vertex corresponding to the central vertex of $S_n$.
\end{itemize}

As an example, to obtain the cycle $C_4$ (which belongs to $\mathcal{F}_1$) we begin with the star $S_1$ (a path $P_2$), next we apply the operation ``\emph{Subdivision}'' to the unique edge of $S_1$ and then we apply the operation ``\emph{Inflation of size $2$}'' to the path $P_3$ obtained in the previous step. Note that different sequences of operations would lead to the same graph. For instance, the graph $P_5$ can be obtained from the star $S_1$ by subdividing its unique edge and then adding a pendant vertex to the leaf corresponding to the subdivision, as well as another pendant vertex to the center of $S_1$ (coincidentally such center is also a leaf). Moreover, the graph $P_5$ is obtained from the star $P_3$ by subdividing one of its edges and then adding a pendant vertex to the leaf corresponding to such subdivision. On the other hand, we remark that three integers $n,a,b$ would produce different graphs $H_{n,a,b}$ depending on the addition of pendant vertices that would be done. However, since it is not significant for our work to denote them, we skip to use the notations for the addition of pendant vertices.  A fairly representative graph of the family $\mathcal{F}_1$ is given in Figure \ref{tree-T-2-2-Q}.

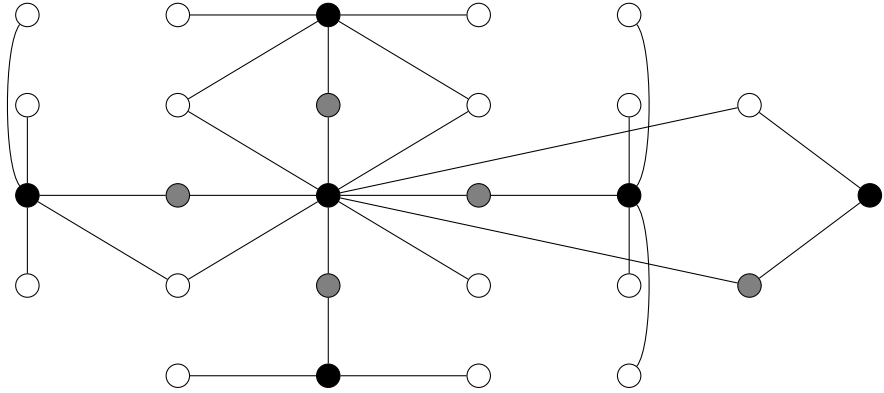
\begin{figure}[h]
\centering
\begin{tikzpicture}[scale=.8, transform shape]

\node [draw, shape=circle, fill=black] (c00) at  (0,0) {};
\node [draw, shape=circle, fill=gray] (a20) at  (2.5,0) {};
\node [draw, shape=circle, fill=black] (a40) at  (5,0) {};
\node [draw, shape=circle, fill=gray] (b20) at  (-2.5,0) {};
\node [draw, shape=circle, fill=black] (b40) at  (-5,0) {};
\node [draw, shape=circle, fill=gray] (c02) at  (0,1.5) {};
\node [draw, shape=circle] (a22) at  (2.5,1.5) {};
\node [draw, shape=circle] (a42) at  (5,1.5) {};
\node [draw, shape=circle] (b22) at  (-2.5,1.5) {};
\node [draw, shape=circle] (b42) at  (-5,1.5) {};
\node [draw, shape=circle, fill=black] (c04) at  (0,3) {};
\node [draw, shape=circle] (a24) at  (2.5,3) {};
\node [draw, shape=circle] (a44) at  (5,3) {};
\node [draw, shape=circle] (b24) at  (-2.5,3) {};
\node [draw, shape=circle] (b44) at  (-5,3) {};
\node [draw, shape=circle, fill=gray] (cc02) at  (0,-1.5) {};
\node [draw, shape=circle] (aa22) at  (2.5,-1.5) {};
\node [draw, shape=circle] (aa42) at  (5,-1.5) {};
\node [draw, shape=circle] (bb22) at  (-2.5,-1.5) {};
\node [draw, shape=circle] (bb42) at  (-5,-1.5) {};
\node [draw, shape=circle, fill=black] (cc04) at  (0,-3) {};
\node [draw, shape=circle] (aa24) at  (2.5,-3) {};
\node [draw, shape=circle] (aa44) at  (5,-3) {};
\node [draw, shape=circle] (bb24) at  (-2.5,-3) {};

\node [draw, shape=circle] (d1) at  (7,1.5) {};
\node [draw, shape=circle, fill=gray] (d2) at  (7,-1.5) {};
\node [draw, shape=circle, fill=black] (d3) at  (9,0) {};

\draw(c00)--(d1)--(d3)--(d2)--(c00);
\draw(c04)--(c02)--(c00)--(cc02)--(cc04);
\draw(b40)--(b20)--(c00)--(a20)--(a40);
\draw(b22)--(c00)--(a22);
\draw(bb22)--(c00)--(aa22);
\draw(b24)--(c04)--(a24);
\draw(bb24)--(cc04)--(aa24);
\draw(b42)--(b40)--(bb42);
\draw(a42)--(a40)--(aa42);
\draw(a22)--(c04)--(b22);
\draw(b40)--(bb22);

\draw (b40) .. controls (-5.4,0.5) and (-5.4,2.5) .. (b44);
\draw (a40) .. controls (5.4,0.5) and (5.4,2.5) .. (a44);
\draw (a40) .. controls (5.4,-0.5) and (5.4,-2.5) .. (aa44);

\end{tikzpicture}
\caption{A graph $H_{5,5,3}\in \mathcal{F}_1$ where the six bolded vertices form an $\alpha(H)$-set and gray vertices form a possible set to be added to the bolded vertices to get a $\gamma_{t,coi}(H)$-set, which has cardinality eleven.}
\label{tree-T-2-2-Q}
\end{figure}

\begin{remark}\label{family-F-values}
For any graph $H_{n,a,b}\in \mathcal{F}_1$, $\alpha(H_{n,a,b})=a+1$ and $\gamma_{t,coi}(H_{n,a,b})=2a+1$.
\end{remark}

\begin{proof}

For any edge of $S_n$ which was subdivided in step (a), it appears either a path $P_4$ or a cycle $C_4$ and all these subgraphs have in common only one vertex (the corresponding one to the center of $S_n$). Thus, in order to cover all the edges of $H_{n,a,b}$, at least $a+1$ vertices are required. Moreover, a set given by those $a$ leaves corresponding to the $a$ edges of the star $S_n$ which were subdivided together with the central vertex form a vertex cover of cardinality $a+1$. Thus, the equality $\alpha(H_{n,a,b})=a+1$ follows.

Now, let $D$ be a  $\gamma_{t,coi}(H_{n,a,b})$-set. We analyze the following situations for every edge $wu$ (assume $w$ is the center of $S_n$) of the star which is initially subdivided.\\

Case 1: There is only one path between $w$ and $u$ in $H_{n,a,b}$. Hence, the edge $wu$ was subdivided with a vertex, say $v$, and not inflated, which made a required addition of at least one pendant vertex, say $u'$, to the leaf $u$. Thus, in order to totally dominate $u'$, $|D\cap \{v,u,u'\}|\ge 2$.\\

Case 2: There are at least two paths between $w$ and $u$ in $H_{n,a,b}$. Clearly, this means $wu$ was subdivided and then inflated with at least two vertices,  say $v_1,\dots,v_r$, $r\ge 2$. Moreover, probably some pendant vertices were added to $u$. So, in order to totally dominate $v_1,\dots,v_r,u$ (and probably other extra leaves adjacent to $u$), at least two vertices of $v_1,\dots,v_r,u$ are required. \\

We next consider the vertex $w$ separately. If $a<n$, then the vertex $w$ has a least one adjacent leaf which needs to be totally dominated. Thus, $w$ must belong to $D$. On the contrary, if $a=n$, then we must consider the value $b$. If $b=0$, then no path $P_3$ was inflated ($H_{n,a,b}$ is a tree) and so, by step (e), $w$ has at least one adjacent leaf which needs to be totally dominated, which means $w$ must belong to $D$ again. Finally, we assume $b>0$. Thus, at least one path $P_3$ was inflated and there is a cycle $C_4^{(j)}$ to which $w$ belongs. Also, it may happen $w$ has no adjacent leaves. Now, note that if $w\notin D$, then the two vertices of $C_4^{(j)}$ adjacent to $w$ must belong to $D$, since $\overline{D}$ is an independent set. Moreover, the fourth vertex of $C_4^{(j)}$ must belong to $D$ too, in order to get the vertices of $D$ totally dominated. As a consequence, at least three vertices of the cycle are in $D$, which is equivalent to have in $D$ the vertex $w$, one of its neighbors in $C_4^{(j)}$ and the vertex of $C_4^{(j)}$ which is not adjacent to $w$.

Consequently, we can deduce that for any set of vertices of a subgraph of $H_{n,a,b}$, induced by the vertices obtained in a subdivision of one of the $a$ leaves of $S_n$ and probably the corresponding addition of some pendant vertices, at least two of these vertices are in $D$. Moreover, one extra vertex is required, which could mainly be the central vertex $w$ of $S_n$. Thus,  $\gamma_{t,coi}(H_{n,a,b})=|D|\ge 2a+1$.

On the other hand, by using Theorem \ref{th-cover-tcid}, we obtain that $\gamma_{t,coi}(H_{n,a,b})\le 2\alpha(H_{n,a,b})-1=2a+1$ and the equality follows for $\gamma_{t,coi}(H_{n,a,b})$.
\end{proof}

Now, a graph $H\in \mathcal{F}_2$ is a graph obtained from the cycle $C_6$ by making the following sequence of operations, which we will call as \textbf{Sequence II}.
\begin{itemize}
  \item[(a)] Apply the operation ``\emph{Addition of $t_i$ pendant vertices}'', $t_i\ge 0$ and $i\in \{1,2,3\}$, to the three vertices, say $v_1,v_2,v_3$, of a $\beta(C_6)$-set, respectively.
  \item[(b)] If there is a $t_i=0$ from the above operation and the degree of $v_i$ is two, then apply the operation ``\emph{Inflation of size $k$}'' with $k\ge 2$ to one of the two possible paths of order three between $v_i$ and the other two vertices in $\{v_1,v_2,v_3\}-\{v_i\}$.
  \item[(c)] Apply the operation ``\emph{Inflation of size $k_i$}'' with $k_i\ge 1$ and $i\in \{1,2,3\}$ to the three possible paths of order three between $v_1,v_2,v_3$.
\end{itemize}

An example of a graph of the family $\mathcal{F}_2$ appears in Figure \ref{Figure-family-F-2}.

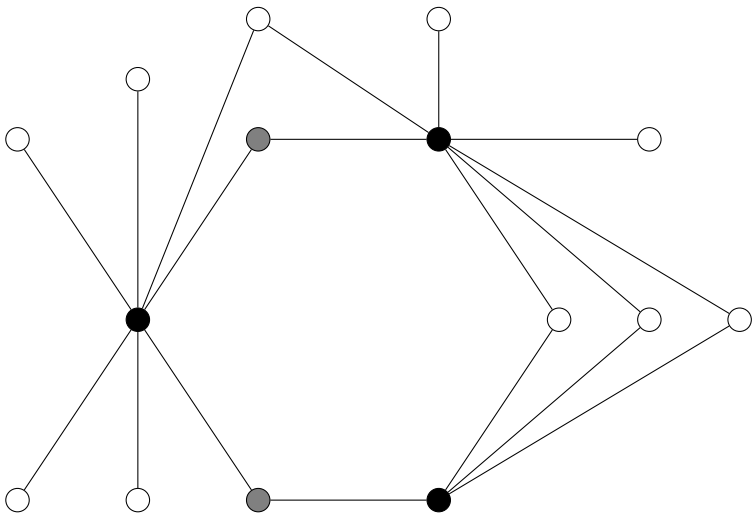
\begin{figure}[h]
\centering
\begin{tikzpicture}[scale=.8, transform shape]

\node [draw, shape=circle] (a1) at  (0,-3) {};
\node [draw, shape=circle, fill=black] (a2) at  (0,0) {};
\node [draw, shape=circle, fill=gray] (a3) at  (2,-3) {};
\node [draw, shape=circle, fill=gray] (a4) at  (2,3) {};
\node [draw, shape=circle] (a5) at  (2,5) {};
\node [draw, shape=circle, fill=black] (a6) at  (5,-3) {};
\node [draw, shape=circle, fill=black] (a7) at  (5,3) {};
\node [draw, shape=circle] (a8) at  (5,5) {};
\node [draw, shape=circle] (a9) at  (7,0) {};
\node [draw, shape=circle] (a10) at  (8.5,0) {};
\node [draw, shape=circle] (a11) at  (8.5,3) {};
\node [draw, shape=circle] (a12) at  (10,0) {};
\node [draw, shape=circle] (a13) at  (0,4) {};
\node [draw, shape=circle] (b) at  (-2,-3) {};
\node [draw, shape=circle] (c) at  (-2,3) {};

\draw(a1)--(a2)--(a4)--(a7)--(a9)--(a6)--(a10)--(a7)--(a12)--(a6)--(a3)--(a2);
\draw(a2)--(a5)--(a7)--(a8);
\draw(a7)--(a11);
\draw(a2)--(a13);
\draw(b)--(a2)--(c);
\end{tikzpicture}
\caption{A graph $H\in \mathcal{F}_2$ where the three bolded vertices form an $\alpha(H)$-set and the two gray vertices form a possible set to be added to the bolded vertices to get a $\gamma_{t,coi}(H)$-set, which has cardinality five.}
\label{Figure-family-F-2}
\end{figure}

The following result concerning the values of $\alpha(H)$ and $\gamma_{t,coi}(H)$ for graphs $H\in \mathcal{F}_2$ is straightforward to observe.

\begin{remark}\label{family-F-2-values}
For any graph $H\in \mathcal{F}_2$, $\alpha(H)=3$ and $\gamma_{t,coi}(H)=5$.
\end{remark}

According to the Remarks above, we can easily check that the upper bound of Theorem \ref{th-cover-tcid} is achieved for any graph $G\in \mathcal{F}_1\cup \mathcal{F}_2$. Moreover, we next prove that precisely the graphs of these families are the only ones achieving the upper bound of Theorem \ref{th-cover-tcid}. To this end, we need the following two lemmas whose proofs can be made by using some similar techniques as in the proof of Theorem \ref{th-cover-tcid}.

\begin{lemma}\label{no-independent}
If a graph $G$ contains an $\alpha(G)$-set which is not independent, then $\gamma_{t,coi}(G)\le 2\alpha(G)-2$.
\end{lemma}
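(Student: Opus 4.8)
The plan is to adapt the construction used in the proof of Theorem~\ref{th-cover-tcid}, exploiting the extra hypothesis that some minimum vertex cover carries an internal edge. First I would fix a non-independent $\alpha(G)$-set $C$ together with an edge $uv$ having $u,v\in C$. Since $\overline{C}$ is the complement of a vertex cover it is independent, so \emph{any} set containing $C$ automatically has an independent complement; the two genuinely nontrivial conditions to control are therefore total domination and the nonemptiness of the complement. I would then let $C_0=\{x\in C : N_G(x)\cap C=\emptyset\}$ be the vertices of $C$ with no neighbor inside $C$, observe that $u,v\notin C_0$ (they totally dominate one another), and note that $C$ already totally dominates every vertex except those of $C_0$: each vertex of $\overline{C}$ has a neighbor in $C$ (no isolated vertices, and $\overline{C}$ is independent), and each vertex of $C\setminus C_0$ has a neighbor in $C$ by definition. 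Thus, to repair total domination it suffices to add, for each $x\in C_0$, one neighbor $x'$, which necessarily lies in $\overline{C}$. Setting $S=C\cup\{x' : x\in C_0\}$, a routine check shows $S$ is a total dominating set whose complement is independent.

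Next I would bound $|C_0|$ in two different ways. On the one hand $C_0\subseteq C\setminus\{u,v\}$, so $|C_0|\le\alpha(G)-2$, which yields the cardinality estimate $|S|\le|C|+|C_0|\le 2\alpha(G)-2$. On the other hand I would argue that $C_0\cup\{u\}$ is independent: every vertex of $C_0$ has all its neighbors in $\overline{C}$, so none of them is adjacent to $u\in C$, while $C_0$ is itself independent; hence $|C_0|+1\le\beta(G)$, that is, $|C_0|\le\beta(G)-1$. Because the newly added vertices all lie in $\overline{C}$ and number at most $|C_0|\le\beta(G)-1<|\overline{C}|=\beta(G)$, at least one vertex of $\overline{C}$ survives in $\overline{S}$, so $\overline{S}\ne\emptyset$ and $S$ is a legitimate TC-ID set. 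Combining the two bounds gives $\gamma_{t,coi}(G)\le|S|\le 2\alpha(G)-2$.

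I expect the domination verification and the cardinality count to be entirely routine. The delicate point, and the step that the proof of Theorem~\ref{th-cover-tcid} avoids through its $\alpha(G)\ge n/2$ case split, is guaranteeing $\overline{S}\ne\emptyset$; this is exactly what the independence argument for $C_0\cup\{u\}$ (forcing $|C_0|\le\beta(G)-1$) is designed to secure, and it is essential in the regime $\alpha(G)\ge n/2$, where the trivial bound $\gamma_{t,coi}(G)\le n-1$ is too weak to reach $2\alpha(G)-2$.
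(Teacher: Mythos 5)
Your proof is correct, and its skeleton is the same as the paper's: augment a non-independent minimum vertex cover $C$ (containing an adjacent pair $u,v$) by chosen neighbors so that total domination holds, while independence of the complement comes for free from $\overline{S}\subseteq\overline{C}$. The paper simply adds a neighbor $x'$ for \emph{every} $x\in C-\{u,v\}$ and concludes with the count $2(|C|-2)+2\le 2\alpha(G)-2$; it never verifies that the complement of the resulting set is nonempty, which is part of the definition of a TC-ID set. This is a genuine (if fixable) gap in the paper's argument: for instance, in a complete graph $K_n$ one has $\beta(G)=1$, and if the neighbors $x'$ are chosen badly (all outside $C$), the paper's construction can swallow the entire vertex set. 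Your two refinements close exactly this gap: adding neighbors only for the set $C_0$ of vertices of $C$ isolated in the subgraph induced by $C$, and the observation that $C_0\cup\{u\}$ is independent, which forces $|C_0|\le\beta(G)-1$ and hence leaves at least one vertex of $\overline{C}$ outside $S$. (One could also repair the paper's proof by a case split: when $2\alpha(G)\ge n+1$ the trivial bound $\gamma_{t,coi}(G)\le n-1\le 2\alpha(G)-2$ already suffices, and when $2\alpha(G)\le n$ the added vertices cannot exhaust $\overline{C}$; your argument avoids any such split and works uniformly.) So what the paper buys is brevity, and what you buy is a complete, self-contained verification of all three defining conditions of a TC-ID set.
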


\begin{proof}
Let $C$ be an $\alpha(G)$-set which is not independent. We choose two adjacent vertices $u,v\in C$. For each vertex $x\in C -\{u, v\}$, choose
a neighbor $x'$ of $x$. Then $C\cup \{x' : x\in C-\{u, v\}\}$ is a TC-ID set of cardinality $2(|C|-2)+2\le 2\alpha(G)- 2$.
\end{proof}

\begin{lemma}\label{no-caminos-disj}
If an $\alpha(G)$-set of a graph $G$ contains four different vertices $u_1,u_2,v_1,v_2$ such that a shortest $u_1-u_2$ path and a shortest $v_1-v_2$ path have length two and are vertex disjoint, then $\gamma_{t,coi}(G)\le 2\alpha(G)-2$.
\end{lemma}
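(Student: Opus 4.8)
The plan is to imitate the construction in the proof of Theorem~\ref{th-cover-tcid}, but to exploit the two vertex-disjoint length-two paths so as to ``save'' two vertices instead of one, thereby reaching the stronger bound $2\alpha(G)-2$. Let $C$ be an $\alpha(G)$-set containing the four vertices $u_1,u_2,v_1,v_2$ from the hypothesis, and write the two vertex-disjoint shortest paths as $u_1au_2$ and $v_1bv_2$, with middle vertices $a$ and $b$. The key structural facts I would record first are that, since $C$ is a vertex cover, its complement $\overline{C}$ is independent, and since $G$ has no isolated vertices every vertex of $\overline{C}$ has a neighbor in $C$. Consequently every vertex outside $C$ is already totally dominated by $C$, and the complement of any superset of $C$ remains independent. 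Hence it suffices to enlarge $C$ to a set $S\supseteq C$ in which each vertex of $C$ also acquires a neighbor in $S$, while keeping $|S|\le 2\alpha(G)-2$ and $\overline{S}\neq\emptyset$.

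The explicit set I would take is $S=C\cup\{a,b\}\cup\{x' : x\in C-\{u_1,u_2,v_1,v_2\}\}$, where for each remaining $x\in C$ we choose an arbitrary neighbor $x'$. Checking that $S$ is a total dominating set is routine: the middle vertices $a,b$ totally dominate $u_1,u_2,v_1,v_2$ and are themselves dominated by these four vertices; each remaining $x\in C$ is dominated by its chosen $x'$; each $x'$ is dominated by $x\in C$; and each further vertex of $\overline{C}$ is dominated by one of its neighbors in $C$. For the cardinality, $S$ adds to $C$ at most the two vertices $a,b$ together with at most $|C|-4$ chosen neighbors, so $|S|\le \alpha(G)+2+(\alpha(G)-4)=2\alpha(G)-2$. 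This is exactly the improvement the hypothesis buys us: each length-two path lets a single middle vertex serve two members of $C$, so the two disjoint paths cover four members of $C$ at a cost of only two new vertices.

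The one genuinely delicate point --- the same one already present in Theorem~\ref{th-cover-tcid} --- is ensuring that $\overline{S}\neq\emptyset$, which is required for $S$ to qualify as a TC-ID set. I would dispose of it by a case split on $\alpha(G)$. When $\alpha(G)<(n+1)/2$ we have $|S|\le 2\alpha(G)-2\le n-2<n$, so the complement of $S$ is nonempty, $S$ is a bona fide TC-ID set, and $\gamma_{t,coi}(G)\le 2\alpha(G)-2$. When $\alpha(G)\ge (n+1)/2$ the construction may exhaust $V(G)$, but then $2\alpha(G)-2\ge n-1$, so the trivial upper bound $\gamma_{t,coi}(G)\le n-1$ already gives $\gamma_{t,coi}(G)\le n-1\le 2\alpha(G)-2$ without using the construction at all. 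Combining the two cases finishes the proof; thus the main obstacle is not the domination verification but this emptiness bookkeeping, and the disjointness hypothesis is precisely what makes the construction's count compatible with the $2\alpha(G)-2$ target.
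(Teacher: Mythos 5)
Your proposal is correct and follows essentially the same construction as the paper: take the $\alpha(G)$-set $C$, add the two middle vertices of the disjoint length-two paths (the paper calls them $w_1,w_2$, you call them $a,b$), and add one chosen neighbor $x'$ for each $x\in C-\{u_1,u_2,v_1,v_2\}$, giving a TC-ID set of cardinality at most $2\alpha(G)-2$. The only difference is that you explicitly verify the nonemptiness of the complement via a case split on $\alpha(G)$, a detail the paper's proof leaves implicit; this is sound and slightly more careful, not a different approach.
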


\begin{proof}
Let $C$ be an $\alpha(G)$-set such that $u_1,u_2,v_1,v_2\in C$. For each vertex $x\in C -\{u_1,u_2,v_1,v_2\}$, choose
a neighbor $x'$ of $x$ and let $w_1,w_2\notin C$ be two vertices such that $w_1\in N(u_1)\cap N(u_2)$ and $w_2\in N(v_1)\cap N(v_2)$, which exist by assumption. Then $C\cup \{w_1,w_2\} \cup \{x' : x\in C-\{u_1,u_2,v_1,v_2\}\}$ is a TC-ID set of cardinality $2(|C|-4)+6\le 2\alpha(G)- 2$.
\end{proof}

\begin{theorem}
Let $G$ be a graph of order $n$ such that $2\alpha(G)\le n$. Then $\gamma_{t,coi}(G)= 2\alpha(G)-1$ if and only if $G\in \mathcal{F}_1\cup \mathcal{F}_2$.
\end{theorem}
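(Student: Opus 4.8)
The plan is to prove both implications, with essentially all the work in the ``only if'' direction. For the ``if'' direction I would simply invoke Remarks \ref{family-F-values} and \ref{family-F-2-values}, which already give $\gamma_{t,coi}(H)=2\alpha(H)-1$ for every $H\in\mathcal{F}_1\cup\mathcal{F}_2$; since the defining operations only add vertices to $C_4$, $C_6$ or to a subdivided star, one checks that each such $H$ satisfies $2\alpha(H)\le |V(H)|$, so these graphs meet the hypothesis and the backward implication follows at once.

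For the ``only if'' direction, assume $\gamma_{t,coi}(G)=2\alpha(G)-1$ and fix a minimum vertex cover $C$; note $\alpha(G)\ge 2$, since $\alpha(G)=1$ forces $G$ to be a star, for which $\gamma_{t,coi}=2\ne 2\alpha-1$. The hypothesis gives $\gamma_{t,coi}(G)>2\alpha(G)-2$, so by the contrapositives of Lemmas \ref{no-independent} and \ref{no-caminos-disj} the set $C$ is independent and contains no four vertices joined by two vertex-disjoint shortest paths of length two. Independence of $C$, together with the fact that $\overline{C}$ is independent, shows $G$ is bipartite with parts $C$ and $I:=\overline{C}$, and by Theorem \ref{th gallai} and the hypothesis $2\alpha(G)\le n$ we have $|C|=\alpha(G)\le n/2\le|I|$. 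Hence every edge joins $C$ to $I$, and two vertices of $C$ are at distance two exactly when they have a common neighbour in $I$; call such a pair \emph{linked}. Since $G$ is connected with $|C|\ge 2$, every vertex of $C$ lies in some linked pair.

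The core of the argument is to turn these facts into a dichotomy. First I would rule out, by a direct improvement of the construction used in Theorem \ref{th-cover-tcid}, any vertex of $I$ adjacent to three or more vertices of $C$: such a vertex would let one dominate several branches simultaneously and build a TC-ID set of size at most $2\alpha(G)-2$. Consequently every vertex of $I$ has at most two neighbours in $C$, so each linked pair is realised through degree-two vertices of $I$. Combined with the forbidden two disjoint length-two paths, this forces the linked pairs to intersect pairwise, and a family of two-element sets that pairwise intersect either shares a common element or is contained in a triangle. This yields two cases: (i) some $w\in C$ belongs to every linked pair, or (ii) $|C|=3$ and its three vertices are pairwise linked through three distinct vertices of $I$. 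I expect case (i) to reconstruct $\mathcal{F}_1$ and case (ii) to reconstruct $\mathcal{F}_2$. In case (i) each $u\in C\setminus\{w\}$ is linked only to $w$, so its connection to $w$ consists of one degree-two vertex of $I$ (a \emph{subdivision}) or several of them (an \emph{inflation} producing $C_4$-gadgets), while minimality of $C$ and the requirement that the complement of a TC-ID set be independent force the pendant additions of steps (c)--(d) and the treatment of $w$ in steps (e)--(f) of Sequence I; this is exactly the local analysis already carried out in the proof of Remark \ref{family-F-values}. In case (ii) the three vertices, each pair joined by length-two paths through distinct $I$-vertices and carrying the prescribed inflations and optional pendants, reassemble a $C_6$ with Sequence II, giving $G\in\mathcal{F}_2$.

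The main obstacle, I anticipate, is precisely the hub-exclusion and the ``forced pendant'' bookkeeping, because Lemmas \ref{no-independent} and \ref{no-caminos-disj} alone do not suffice. Indeed a vertex of $I$ adjacent to several cover vertices routes all its length-two paths through itself, so Lemma \ref{no-caminos-disj} stays silent, yet the associated graph (a spider-like configuration) has $\gamma_{t,coi}$ strictly below $2\alpha-1$; ruling it out requires an explicit saving construction rather than an appeal to the lemmas. Similarly one must verify that each pendant demanded in Sequences I and II is genuinely forced---by total domination of the leaves together with independence of the TC-ID complement---and not merely optional, and one must treat the boundary case $2\alpha(G)=n$ and the smallest members ($C_4$, $P_5$, $C_6$) separately. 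Each of these is a finite, local computation in the spirit of Theorem \ref{th-cover-tcid}, but assembling them into the clean statement ``$G$ arises from Sequence I or Sequence II'' is where the bulk of the care lies.
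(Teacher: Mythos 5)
Your proposal is correct in substance, but it organizes the ``only if'' direction along a genuinely different route than the paper. The paper works through the cycle structure of $G$: using Lemmas \ref{no-independent} and \ref{no-caminos-disj} it first forbids triangles, induced cycles $C_5$ and $C_{\ge 7}$, vertex-disjoint cycles, and $C_4$'s sharing an edge, then splits on whether $G$ contains a $C_6$ (yielding $\mathcal{F}_2$ via Sequence II) or not (yielding, via two further claims that produce a ``hub'' vertex $w$ at distance two from every other cover vertex and that all shortest paths between cover vertices pass through $w$, the star structure of $\mathcal{F}_1$). You instead exploit bipartiteness (independent cover $C$ versus independent complement $I$), prove a hub-exclusion step (no vertex of $I$ with three or more neighbours in $C$), deduce that the ``linked pairs'' of $C$ pairwise intersect, and then invoke the elementary dichotomy for pairwise-intersecting $2$-sets: a common element (your case (i), recovering $\mathcal{F}_1$) or a triangle on three cover vertices (your case (ii), whose three length-two paths through distinct midpoints assemble exactly the $C_6$ of $\mathcal{F}_2$). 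This is a cleaner scaffolding: it reaches the two structural cases without any explicit induced-cycle bookkeeping, and it makes transparent that every vertex of $I$ is either a pendant leaf or a degree-two midpoint. You are also right that Lemmas \ref{no-independent} and \ref{no-caminos-disj} alone cannot exclude hubs; your saving construction $C\cup\{h\}\cup\{x':x\in C\setminus\{u_1,u_2,u_3\}\}$ does work (its complement lies in $I$ and is nonempty since $n\ge 2\alpha(G)$), and in fact the paper performs the very same kind of inline saving in its $C_6$ analysis (when $v_2\sim v_5$, or when some $x$ is adjacent to both $v_3$ and $v_5$), so your step is a legitimate extraction of that idea into a standalone lemma. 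What the paper's version buys, by contrast, is that it actually executes the reconstruction bookkeeping you defer: the forced pendants (e.g., a cover vertex with a single midpoint and no leaf yields a non-independent minimum cover, contradicting Lemma \ref{no-independent}), the forced leaf at the hub when no branch is inflated, the degree-three condition at the three cover vertices in the $C_6$ case, and the small boundary examples; these are all routine savings of the same flavour, but they constitute most of the written proof, so your plan, while sound, is complete only modulo that finite local analysis.
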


\begin{proof}

In one hand, if $G\in \mathcal{F}_1\cup \mathcal{F}_2$, then it clearly happens that $\gamma_{t,coi}(G)= 2\alpha(G)-1$ according to Remarks \ref{family-F-values} and \ref{family-F-2-values}.

On the second hand, assume $\gamma_{t,coi}(G)= 2\alpha(G)-1$ and let $D$ be any $\alpha(G)$-set. We first notice that $D$ must induce an independent set according to Lemma \ref{no-independent}. We shall now proceed by proving some partial claims that will further give our required conclusion.\\

\noindent{\bf Claim 1:} $G$ has no triangles (cycles of order three).

\noindent{\bf Proof of Claim 1:} If there is a triangle, then, in order to cover all its edges, at least two of its vertices must belong to $D$. So, this cover is not an independent set. Thus we get a contradiction by using Lemma \ref{no-independent}. {\small $(\Box)$}\\

\noindent{\bf Claim 2:} $G$ has no induced cycles of order five or larger than six.

\noindent{\bf Proof of Claim 2:} Suppose $G$ contains a cycle $C_r$ with $r=5$ or $r\ge 7$. In order to cover all the edges of $C_r$, and since $r\ne 6$, there must be two vertices in $D\cap V(C_r)$ at distance one (which means $D$ is not independent), or there are four different vertices $u_1,u_2,v_1,v_2\in D\cap V(C_r)$ such that a shortest $u_1-u_2$ path and a shortest $v_1-v_2$ path have length two and are vertex disjoint. Thus, we obtain contradictions by using Lemmas \ref{no-independent} and \ref{no-caminos-disj}. {\small $(\Box)$}\\

As a consequence of the Claims above, we have that $G$ can only contain cycles of order four or six. We first analyze the case in which $G$ contains a cycle of order six. Let $V(C_6) = \{v_1,\dots,v_6\}$ where $v_1\sim v_2\sim \dots \sim v_6\sim v_1$ ($u\sim v$ means $u,v$ are adjacent). According to Lemma \ref{no-independent}, it must happen $D\cap V(C_6)$ is independent. Thus, without loss of generality we assume $D\cap V(C_6)=\{v_1,v_3,v_5\}$. We consider now several situations.

Suppose $v_2$ has degree larger than two. If $v_2\sim v_j$ with $j\in \{4,6\}$, then $G$ has a triangle, which is not possible. If $v_2\sim v_5$, then for each vertex $x\in C -\{v_1,v_3,v_5\}$, choose a neighbor $x'$ of $x$ and we observe that the set $D\cup \{v_2\}\cup \{x' : x\in D-\{v_1,v_3,v_5\}$ is a TC-ID set of cardinality $2(|D|-3)+4\le 2\alpha(G)-2$, a contradiction. Thus, $v_2$ has a neighbor $z\notin V(C_6)$. Since $v_2\notin D$, it must happen $z\in D$. Since $z\not\sim v_1$ and $z\not\sim v_3$ (otherwise there would be a triangle), we obtain a contradiction with Lemma \ref{no-caminos-disj} by using the vertices $z,v_1,v_3,v_5$. As a consequence, $v_2$ must have degree two, and by symmetry also $v_4,v_6$ are of degree two too.

Suppose $v_1$ has degree two. Hence, for each vertex $x\in C -\{v_1,v_3,v_5\}$, we choose a neighbor $x'$ of $x$ and observe that the set $(D-\{v_1\})\cup \{v_2,v_6\}\cup \{x' : x\in D-\{v_1,v_3,v_5\}$ is a TC-ID set of cardinality $2(|D|-3)+4\le 2\alpha(G)-2$, a contradiction. So, $v_1$ must have degree at least three and, by symmetry also $v_3,v_5$ are of degree at least three too.

We consider now a vertex $x\in N(v_1)-\{v_2,v_6\}$. Notice that $x\ne v_3,v_5$ (otherwise there would be a triangle). Also, $x\ne v_4$, by using the same idea as before whether $v_2$ has degree larger than two and $v_2\sim v_5$. Suppose $x$ has degree larger than one and let $x'\in N(x)-\{v_1\}$. Since $D$ is independent and the edge $xx'$ must be covered by $D$, $x'\in D$. If $x'\ne v_3$ and $x'\ne v_5$, then we obtain a contradiction with Lemma \ref{no-caminos-disj} by using the vertices $x',v_1,v_3,v_5$ (notice that $x'\not\sim v_1$ since $D$ is independent). As a consequence, we obtain that any neighbor $x$ of $v_1$ is either of degree one or has a neighbor in $V(C_6)-\{v_1\}$.

We next consider the latter situation whether $x'\in V(C_6)-\{v_1\}$. Clearly $x'\ne v_2,v_4,v_6$. Suppose $x$ is neighbor of $v_3$ and of $v_5$. We choose a neighbor $y'$ of $y$ for every $y\in D-\{v_1,v_3,v_5\}$ and observe that the set $D\cup \{x\}\cup \{y' : y\in D-\{v_1,v_3,v_5\}$ is a TC-ID set of cardinality $2(|D|-3)+4\le 2\alpha(G)-2$, a contradiction. Thus, $x$ is a neighbor of either $v_3$ or $v_5$, in which case, it happens $x$ has degree two. By symmetry, we obtain similar conclusions for $v_3$ and $v_5$. That is, for any $v_i$ with $i\in \{1,3,5\}$, $N(v_i)$ is given by leaves or vertices of degree two. In the latter case, if $x\in N(v_i)-V(C_6)$, then $N(x)=\{v_i,y\}$ where $y\in D\cap V(C_6)-\{v_i\}$.

As a consequence, we observe that $G$ can be obtained from a cycle $C_6$ by the Sequence II of operations described above, or equivalently $G\in \mathcal{F}_2$. We may now consider the case in which $G$ contains a cycle $C_4$, but $G$ does not contain the cycle $C_6$.\\

\noindent{\bf Claim 3:} $G$ does not contain vertex disjoint cycles.

\noindent{\bf Proof of Claim 3:} We directly obtain a contradiction by Lemma \ref{no-caminos-disj}, since in this case there are four different vertices $u_1,u_2,v_1,v_2$ (two of them in one cycle, the other two in the other cycle) such that a shortest $u_1-u_2$ path and a shortest $v_1-v_2$ path have length two and are vertex disjoint. {\small $(\Box)$}\\

Thus, if $G$ contains more than one cycle $C_4$, then they are not vertex disjoint. Moreover, we can next see that not two adjacent vertices of a cycle can be in any other cycle.\\

\noindent{\bf Claim 4:} If two cycles $C_4$ of $G$ has exactly two vertices in common, then these vertices are not adjacent.

\noindent{\bf Proof of Claim 4:} Suppose there are two cycles $C_4$ having two adjacent vertices in common. Assume the cycles are $C_4^{(1)}=v_1v_2v_3v_4v_1$ and $C_4^{(2)}=v_1v_2v_5v_6v_1$. Hence, we note that exactly three vertices of $\{v_1,\dots,v_6\}$ must belong to $D$, otherwise there are two adjacent vertices in $D$. Indeed, such vertices are either $v_1,v_3,v_5$ or $v_2,v_4,v_6$, say for instance $v_1,v_3,v_5$. We choose a neighbor $x'$ of $x$ for every $x\in D-\{v_1,v_3,v_5\}$ and observe that the set $D\cup \{v_2\}\cup \{x' : x\in D-\{v_1,v_3,v_5\}$ is a TC-ID set of cardinality $2(|D|-3)+4\le 2\alpha(G)-2$, which is a contradiction. {\small $(\Box)$}\\

Now, according to the Claims above, if $G$ contains more than one cycle $C_4$, then only the following situations can occur.
\begin{itemize}
\item Any two cycles have exactly one vertex in common.
\item Any two cycles have exactly two vertices in common which are not adjacent.
\item Any two cycles have exactly three vertices in common.
\end{itemize}
We note that the situation in which two cycles of $G$ have exactly three vertices in common can be understood as $G$ has three cycles with two vertices in common. We now turn our attention on the following.\\

\noindent{\bf Claim 5:} There is a vertex $w\in D$ such that $d(w,x)=2$ for every $x\in D-\{w\}$.

\noindent{\bf Proof of Claim 5:} We first note that there are at least two vertices $w,x\in D$ such that $d(x,w)=2$, otherwise there would be an edge not covered by $D$. Let $h$ be a vertex adjacent to $w$ and $x$. Suppose there is a vertex $y\in D$ such that $d(w,y)\ne 2$ and $d(x,y)\ne 2$ (note that $d(w,y)\ne 1$ and $d(x,y)\ne 1$). Thus, since there are no cycles of order larger than four in $G$, there must happen one of the following situations.

\begin{itemize}
\item[(a)] \textbf{There is a shortest path joining $y$ and $h$ not containing $w$ nor $x$.} Also, $y$ is different from the neighbor of $h$, say $h'$, in such path. In such case, in order to cover the edge $hh'$, it must happen $h'\in D$. Thus, we obtain a contradiction by using Lemma \ref{no-caminos-disj} and the vertices $h',w,x,z$ where $z$ is a vertex at distance two from $x$ in the $x-y$ path.
\item[(b)] \textbf{Without loss of generality, there is a shortest path joining $y$ and $x$ containing $w$.} Thus, there must be a vertex $y'\in D$ belonging to this path such that $d(y,y')=2$ (it cannot be $d(y,y')=1$ since $D$ is independent), otherwise there should be a not covered edge. Clearly $w\ne y'$. Thus, we obtain a contradiction by using Lemma \ref{no-caminos-disj} and the vertices $y,y',w,x$.
\end{itemize}
As a consequence, the vertex $y$ has distance two to $x$ or to $w$. Moreover, if $d(w,y)=2$ and $d(x,y)=2$, then we there is a cycle of order six, which is not possible. Thus, $y$ has distance two to exactly one vertex of $x$ and $w$. From now on, we assume $d(y,w)=2$.

We next prove that for any vertex $z\in D-\{x,y,w\}$, it follows $d(z,w)=2$ too. If $D=\{x,y,w\}$, then we are done. So, me may suppose there is a vertex $z'\in D-\{x,y,w\}$ such that $d(z',w)\ne 2$ (clearly $d(z',w)>2$). Consider now the shortest path between $z'$ and $w$, say $z' z'_1 z'_2 \dots z'_q w$. Notice that $q\ge 2$. In order to cover the edge $z'_1z'_2$, it must be $z'_2\in D$. So, we obtain a contradiction by using Lemma \ref{no-caminos-disj} and the vertices $x,w,z',z'_2$. Therefore, for any vertex $z\in D-\{w\}$, we obtain that $d(z,w)=2$ and the claim is proved. {\small $(\Box)$}\\

Next step gives some result on the distances between any two vertices $x,y\in D-\{w\}$.\\

\noindent{\bf Claim 6:} For any two vertices $x,y\in D-\{w\}$, any shortest path between $x$ and $y$ passes through $w$.

\noindent{\bf Proof of Claim 6:} From Claim 5, we know that $d(x,w)=d(y,w)=2$. Thus $d(x,y)\le 4$. Clearly $d(x,y)>1$, since $x,y$ cannot be adjacent. Let $x',y'\in N(w)$ such that $x'\in N(x)$ and $y'\in N(y)$. If $x\sim y'$ or $y\sim x'$ (say $x\sim y'$), then we choose a neighbor $z'$ of $z$ for every $z\in D-\{w,x,y\}$ and observe that the set $D\cup \{y'\}\cup \{z' : z\in D-\{w,x,y\}$ is a TC-ID set of cardinality $2(|D|-3)+4\le 2\alpha(G)-2$, which is a contradiction. Thus, neither $x\sim y'$ nor $y\sim x'$. If there is a vertex $z\in N(x)\cap N(y)$, then $w x' x z y y' w$ is a cycle $C_6$ in $G$, which is not possible. Thus $d(x,y)\ne 2$. By using a similar reasoning, it can be deduced that $d(x,y)\ne 3$ and so, $d(x,y)=4$. If there is another path of length four between $x$ and $y$ not containing $w$, then we have one of the following situations.
\begin{itemize}
  \item There is a vertex $w'\in D$ such that $x',y'\in N(w')$ (note that $w'$ must be in $D$ in order to cover the edges $w'y'$, $w'x'$). In such case, we obtain a contradiction by using Lemma \ref{no-caminos-disj} and the vertices $x,w,w',y$.
  \item There are three vertices $x_1,y_1,w''\ne x',y',w$ such that $x_1\in N(x)$, $y_1\in N(y)$ and $x_1,y_1\in N(w'')$. In such situation, $w x' x x_1 w'' y_1 y y' w$ is an induced cycle of order eight in $G$, which is not possible.
  \item Similarly to the case above, if either $x_1=x'$ or $y_1=y'$, then we obtain an induced cycle of order six in $G$, which is also not possible.
\end{itemize}
Therefore, any shortest path between $x$ and $y$ passes throughout $w$. {\small $(\Box)$}\\

We now give several facts which are consequences of the Claims above, in order to deduce the structure of the graph $G$.
\begin{itemize}
  \item The set $V(G)-D$ is independent (otherwise there is an edge not covered by $D$).
  \item If $x,y\in D-\{w\}$, then $N(x)\cap N(y)=\emptyset$.
  \item If $z\in N(x)$ for some $x\in D-\{w\}$, then either $z\in N(w)$ and $z$ has degree two, or $z$ is a vertex of degree one.
  \item If $z'\in N(w)$ is not a vertex of degree one, then there is exactly one vertex $x\in D$ such that $N(z')=\{w,x\}$ (equivalently $z'$ has degree two).
\end{itemize}
As a consequence of the items above, as well as from the Claims, and all the reasoning till this point, we observe that $D$ is formed by $w$ and a set of vertices $v_1,v_2\dots,v_r$ (satisfying the properties above). Clearly, for any vertex $v_i$, the set of its neighbors are either leaves or vertices of degree two adjacent to $w$. Moreover, if $v_i$ has only one neighbor of degree two, then it must have at least one adjacent leaf (otherwise one can find a cover set of smaller cardinality). In this sense, such set of vertices can clearly be obtained from a leaf of a star by making a subdivision of the corresponding edge, an inflation of the path $P_3$ obtained from the subdivision and a subsequent addition of some pendant vertices. On the other hand, if $w$ has some adjacent leaves, then they could be obtained directly from a star, if subdivisions were not done to all the leaves of the star or, by a subsequent addition of leaves to the center of the original star, if all its leaves would have been subdivided. Therefore, it is then concluded that the graph $G$ was obtained from a star by making the Sequence I of operations previously described, which means $G\in \mathcal{F}_1$ and the proof is completed.
\end{proof}

We close this section with two bounds for $\gamma_{t,coi}(G)$ in terms of order, size and minimum and maximum degrees.

\begin{proposition}
Let $G$ be a graph of order $n$, minimum and maximum degrees $\delta$ and $\Delta$, respectively. Then $\gamma_{t,coi}(G)\geq \frac{n\delta}{\Delta+\delta-1}$.
\end{proposition}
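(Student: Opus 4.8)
The plan is to let $D$ be a $\gamma_{t,coi}(G)$-set and estimate the number of edges between $D$ and its complement $\overline{D}=V(G)\setminus D$ by a double-counting argument. Since $D$ is a TC-ID set, the set $\overline{D}$ is independent and nonempty, and since $D$ is in particular a total dominating set, \emph{every} vertex of $G$ (including every vertex of $D$ itself) has a neighbor inside $D$. These two structural facts are exactly what is needed to bound $|E(D,\overline{D})|$ from above and below.

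First I would obtain the lower estimate on $|E(D,\overline{D})|$. Because $\overline{D}$ is independent, each vertex $v\in\overline{D}$ has \emph{all} of its neighbors in $D$; as $\delta(v)\ge\delta$, it contributes at least $\delta$ edges to $E(D,\overline{D})$. Summing over the $n-|D|$ vertices of $\overline{D}$ gives
$$
|E(D,\overline{D})|\ge \delta\,(n-|D|).
$$
Next I would obtain the upper estimate. Each vertex $u\in D$ has degree at most $\Delta$, but the total domination property forces $u$ to have at least one neighbor inside $D$; hence at most $\Delta-1$ of its neighbors lie in $\overline{D}$. Summing over the $|D|$ vertices of $D$ yields
$$
|E(D,\overline{D})|\le (\Delta-1)\,|D|.
$$

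Combining the two inequalities gives $\delta(n-|D|)\le(\Delta-1)|D|$, and rearranging produces $\delta n\le(\Delta+\delta-1)|D|$, which is precisely $\gamma_{t,coi}(G)=|D|\ge \frac{n\delta}{\Delta+\delta-1}$.

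The only genuinely delicate point is the upper bound, where it is essential to use the full strength of \emph{total} domination rather than ordinary domination: it is the fact that vertices of $D$ must themselves be dominated by $D$ that replaces the naive bound $\Delta|D|$ by the sharper $(\Delta-1)|D|$, and this $-1$ is exactly what yields the denominator $\Delta+\delta-1$ in the statement. Everything else is routine bookkeeping, so I expect no serious obstacle beyond being careful that the total domination property is applied to the vertices of $D$ and not merely to those of $\overline{D}$.
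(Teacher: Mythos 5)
Your proof is correct and follows essentially the same argument as the paper: both bound $|E(D,\overline{D})|$ below by $\delta(n-|D|)$ using the independence of $\overline{D}$, and above by $(\Delta-1)|D|$ using the fact that total domination forces each vertex of $D$ to reserve a neighbor inside $D$. Your write-up merely makes explicit the justification for the upper bound that the paper leaves implicit.
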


\begin{proof}
Let $D$ be a $\gamma_{t,coi}(G)$-set. Hence, the subgraph induced by $V(G)-D$ is edgeless. So, $(n-|D|)\delta=(|V(G)-D|)\delta \leq E(V(G)-D, D)\leq |D|(\Delta-1) $. Furthermore, it follows that $\gamma_{t,coi}(G)\geq \frac{n\delta}{\Delta+\delta-1}$.
\end{proof}

\begin{proposition}
Let $G$ be a graph of order $n$, size $m$, minimum and maximum degrees $\delta$ and $\Delta, respectively$. Then $\gamma_{t,coi}(G) \geq \frac{2m+n\delta}{3\Delta+\delta-2}$.
\end{proposition}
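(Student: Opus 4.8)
The plan is to run the same degree-counting scheme as in the previous proposition, but to account additionally for the edges lying inside the dominating set so that the size $m$ enters the estimate. Let $D$ be a $\gamma_{t,coi}(G)$-set and write $\overline{D}=V(G)\setminus D$. Since $D$ is a TC-ID set, $\overline{D}$ is independent, so every edge incident with a vertex of $\overline{D}$ has its other endpoint in $D$; in particular $\sum_{v\in\overline{D}}\delta(v)=|E(D,\overline{D})|$, where $\delta(v)=|N_G(v)|$ denotes the degree of $v$.

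First I would record the two one-sided degree bounds on the cut $E(D,\overline{D})$, exactly the estimates already used in the preceding proof. On the $\overline{D}$ side, each vertex has degree at least $\delta$, which forces $(n-|D|)\delta\le |E(D,\overline{D})|$, and hence $n\delta\le |D|\delta+|E(D,\overline{D})|$. On the $D$ side, since $D$ is a \emph{total} dominating set each vertex of $D$ has at least one neighbor inside $D$, leaving at most $\Delta-1$ of its incident edges to cross to $\overline{D}$; summing over $D$ gives $|E(D,\overline{D})|\le |D|(\Delta-1)$.

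Next I would introduce $m$ via the handshake identity split across the bipartition $(D,\overline{D})$. Writing $2m=\sum_{v\in D}\delta(v)+\sum_{v\in\overline{D}}\delta(v)$, the second sum equals $|E(D,\overline{D})|$ by independence of $\overline{D}$, while the first is at most $|D|\Delta$. Adding $2m$ to the bound $n\delta\le |D|\delta+|E(D,\overline{D})|$ then yields
$$2m+n\delta\le |D|\Delta+2|E(D,\overline{D})|+|D|\delta.$$
Substituting the cut bound $|E(D,\overline{D})|\le |D|(\Delta-1)$ collapses the right-hand side to $|D|\bigl(\Delta+2(\Delta-1)+\delta\bigr)=|D|(3\Delta+\delta-2)$, and rearranging gives $\gamma_{t,coi}(G)=|D|\ge\frac{2m+n\delta}{3\Delta+\delta-2}$, as claimed.

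I do not anticipate a genuine obstacle; the argument is a linear combination of two elementary degree inequalities. The only point requiring care is the bookkeeping of the crossing edges, namely ensuring that $E(D,\overline{D})$ is tallied consistently (once inside $\sum_{v\in\overline{D}}\delta(v)$ when computing $2m$, and again through the bound on $n\delta$), and that it is precisely the total-domination hypothesis that licenses the factor $\Delta-1$ rather than $\Delta$ in the cut estimate. Everything else is routine rearrangement.
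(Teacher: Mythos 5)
Your proof is correct and is essentially the paper's argument: both rest on the same three facts, namely that independence of $\overline{D}$ plus the minimum degree gives $(n-|D|)\delta\le |E(D,\overline{D})|$, that total domination gives $|E(D,\overline{D})|\le |D|(\Delta-1)$, and that the degrees in $D$ are bounded by $\Delta$. The paper merely packages the last two ingredients as $m=E(D,\overline{D})+E(D,D)$ with $E(D,D)\le\frac{|D|\Delta-(n-|D|)\delta}{2}$, whereas you keep them separate via the handshake identity; the algebra is the same rearrangement.
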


\begin{proof}
Let $D$ be a $\gamma_{t,coi}(G)$-set. Hence, the subgraph induced by $V(G)-D$ is edgeless. So, $E(V(G)-D,D) + E(D,D) = m$. Now, notice that $  E(V(G)-D,D)\leq  |D|(\Delta-1)$ and $ E(D,D)\leq \frac{|D|\Delta-(n-|D|)\delta}{2} $. Adding this inequations, we have $m= E(V(G)-D,D) + E(D,D)\leq \frac{|D|\Delta-(n-|D|)\delta}{2} + |D|(\Delta-1) $. Therefore, it follows that $\gamma_{t,coi}(G) \geq \frac{2m+n\delta}{3\Delta+\delta-2}$.
\end{proof}

The two bounds above are attained for instance for the double stars $S_{k,k}$ (each non leaf vertex is adjacent to $k$ leaves), which has order $2(k+1)$, size $m=2k+1$, minimum degree $\delta=1$, maximum degree $\Delta=k+1$  and $\gamma_{t,coi}(S_{k,k})=2$.

\section{The case of trees}

In order to easily proceed with our exposition, and based on the following known bound, from now on we say that a tree $T$ belongs to the family $\mathcal{T}_{\gamma_t}$, if $\gamma_{t,coi}(T)=\gamma_t(T)$. Moreover, we assume in this section that $|S(T)|\geq 2$, since the case $|S(T)|=0$ ($T$ is a $P_2$ and $\gamma_{t,coi}(T)$ is not defined) and  $|S(T)|=1$ ($T$ is a star graph $S_n$ and $\gamma_{t,coi}(T)=2$) are straightforward to study.

\begin{theorem}{\em \cite{Soner2012}}\label{min-bound}
For any graph $G$, $\gamma_{t,coi}(G)\geq \gamma_t(G)$.
\end{theorem}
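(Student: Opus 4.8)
The plan is to exploit the definitional containment between the two families of vertex sets involved. By definition, a total co-independent dominating set of $G$ is, first and foremost, a total dominating set: it is a total dominating set $D$ that additionally satisfies the constraint that $V(G)\setminus D$ induces an edgeless graph and is nonempty. The extra independence and nonemptiness conditions only restrict the collection of admissible sets; they never enlarge it. Consequently, the family of all TC-ID sets of $G$ is a subfamily of the family of all total dominating sets of $G$.

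Concretely, I would take any $\gamma_{t,coi}(G)$-set $D$, that is, a TC-ID set of minimum cardinality. Since $D$ is in particular a total dominating set of $G$, its cardinality cannot be smaller than the minimum cardinality of a total dominating set, which is $\gamma_t(G)$. Hence $\gamma_{t,coi}(G) = |D| \ge \gamma_t(G)$, as claimed. I do not anticipate any real obstacle here: the inequality is an immediate consequence of minimizing a cardinality functional over a subfamily, since the minimum over a smaller collection of sets can never fall below the minimum over the larger collection that contains it.
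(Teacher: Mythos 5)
Your proof is correct and is exactly the argument this result rests on: the paper itself states the theorem without proof (citing \cite{Soner2012}), and its abstract invokes precisely your observation that every total co-independent dominating set is in particular a total dominating set, so the minimum over the smaller family cannot be below $\gamma_t(G)$. Nothing is missing.
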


It is now our goal to characterize the family of trees achieving the equality in the bound above. To this end, we observe the following basic results, which can easily be obtained by using some known properties of minimum total dominating sets.

\begin{proposition}{\em \cite{Cockayne1980}}\label{minimal-TDS}
If $S$ is a minimal total dominating set of a connected graph $G=(V,E)$, then each $v\in S$ has at least one of the following two properties.
\begin{itemize}
  \item[\em{(i)}] There exists a vertex $w\in V - S$ such that $ N(w)\cap S=\{v\}$.
  \item[\em{(ii)}] The subgraph induced by $S-\{v\} $ contains an isolated vertex.
\end{itemize}
\end{proposition}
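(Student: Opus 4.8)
The plan is to exploit the minimality of $S$ directly, in the spirit of the classical Ore-type characterizations of minimal (total) dominating sets. Fix an arbitrary vertex $v\in S$. Since $S$ is a \emph{minimal} total dominating set, the proper subset $S-\{v\}$ is no longer a total dominating set of $G$. This failure must be witnessed by some vertex that loses all of its set-neighbors once $v$ is deleted; that is, there exists $w\in V$ with $N(w)\cap(S-\{v\})=\emptyset$. Producing this witness is the entire content of the hypothesis, and everything else follows by inspecting its neighborhood.

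First I would pin down $N(w)\cap S$. Because $S$ itself is a total dominating set, the vertex $w$ has at least one neighbor in $S$, so $N(w)\cap S\neq\emptyset$. Combining this with $N(w)\cap(S-\{v\})=\emptyset$ forces $N(w)\cap S=\{v\}$; in other words, $v$ is the \emph{unique} neighbor of $w$ lying in $S$. At this point the statement reduces to a single case distinction according to whether $w$ lies outside or inside $S$.

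If $w\in V-S$, then the pair $(v,w)$ satisfies property (i) verbatim, and we are finished for this $v$. If instead $w\in S$, then $v$ being the only $S$-neighbor of $w$ means that, in the subgraph induced by $S-\{v\}$, the vertex $w$ has no neighbor at all; hence $w$ is an isolated vertex of that induced subgraph, which is exactly property (ii). Since $v\in S$ was arbitrary, every vertex of $S$ satisfies (i) or (ii), as claimed.

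I do not anticipate a genuine obstacle here: the argument is a short combination of minimality with a two-way case split. The only point demanding real care — and the place where the word \emph{total} in total domination actually matters — is remembering that the witness $w$ may itself belong to $S$, precisely because total domination requires that even the vertices of $S$ be dominated. Silently assuming $w\in V-S$ would discard property (ii) altogether, so I would make a point of handling both possible locations of $w$ explicitly.
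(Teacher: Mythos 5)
Your proof is correct; note that the paper itself offers no proof of Proposition \ref{minimal-TDS}, quoting it directly from \cite{Cockayne1980}, and your argument is exactly the standard one from that reference: minimality of $S$ produces a witness $w$ with $N(w)\cap(S-\{v\})=\emptyset$, totality of $S$ upgrades this to $N(w)\cap S=\{v\}$, and the case split $w\in V-S$ versus $w\in S$ yields properties (i) and (ii) respectively. Your closing observation is also the right one to emphasize: the witness $w$ may lie inside $S$ (since total domination requires the vertices of $S$ themselves to be dominated), and this is precisely what makes alternative (ii) necessary.
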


The next remark is one useful consequence of the proposition above.

\begin{remark}\label{theo-minimal}
Let $D$ be a $\gamma_{t,coi}(T)$-set of cardinality $\gamma_t(T)$.  Then, for every $v\in D$, at least one of the following conditions is satisfied.
\begin{itemize}
  \item[\em{(i)}] There exists a vertex $u \in D$ such that $ N(u)\cap D=\{v\}$.
  \item[\em{(ii)}] There exists a vertex $w\in V - D$ such that $ N(w)\cap D=\{v\}$.
\end{itemize}
\end{remark}

We may recall to notice that condition $(ii)$ implies that vertex $v$ is a support, because the set $\overline{D}$ is independent.

\begin{lemma}\label{dist-V-D}
Let $T\in \mathcal{T}_{\gamma_t}$ and let $D$ be a $\gamma_{t,coi}(T)$-set containing no leaves. Then for every $v\in V(T)-(D\cup L(T))$ there exist a leaf $h$ such that $d(v,h)\leq 3$.
\end{lemma}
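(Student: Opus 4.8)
The plan is to argue by contradiction. Suppose the conclusion fails, so there is a vertex $v\in V(T)-(D\cup L(T))$ such that every leaf of $T$ lies at distance at least $4$ from $v$; I will then exhibit a total dominating set strictly smaller than $D$. This contradicts the hypothesis $T\in\mathcal{T}_{\gamma_t}$, because it gives $|D|=\gamma_{t,coi}(T)=\gamma_t(T)$, so by Theorem \ref{min-bound} the set $D$ is simultaneously a minimum (hence minimal) total dominating set, and Remark \ref{theo-minimal} applies verbatim to it.

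First I would record the local structure forced around $v$. Since $\overline{D}$ is independent and $v\notin D$, every neighbour of $v$ lies in $D$; as $v$ is not a leaf it has $k\ge 2$ neighbours $u_1,\dots,u_k\in D$. For each $u_i$ I apply Remark \ref{theo-minimal}: its condition $(ii)$ would produce a leaf adjacent to $u_i$, at distance $2$ from $v$, contradicting the assumption, so condition $(i)$ holds and yields a vertex $u_i'\in D$ with $N(u_i')\cap D=\{u_i\}$. Applying the Remark again to $u_i'$, condition $(ii)$ would now give a leaf adjacent to $u_i'$ at distance $3$ from $v$, again impossible; hence $(i)$ holds for $u_i'$ and, because $u_i$ is the only $D$-neighbour of $u_i'$, the witnessing vertex must be $u_i$ itself. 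Thus $N(u_i)\cap D=\{u_i'\}$ and $N(u_i')\cap D=\{u_i\}$, i.e. $\{u_i,u_i'\}$ is a $K_2$-component of the subgraph induced by $D$. A routine no-triangle/no-cycle argument in the tree $T$ then shows that the $2k$ vertices $u_1,\dots,u_k,u_1',\dots,u_k'$ are pairwise distinct and distinct from $v$.

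The heart of the proof is the exchange $D^\ast=(D\setminus\{u_1',\dots,u_k'\})\cup\{v\}$, of cardinality $|D|-k+1\le |D|-1$. I would check that $D^\ast$ is still a total dominating set: each $u_i$ loses its unique $D$-neighbour $u_i'$ but is now dominated by the newly added $v$, while $v$ is dominated by any $u_i\in D^\ast$; each removed $u_i'$ is dominated by $u_i\in D^\ast$ (and, since $u_i$ is the only $D$-neighbour of $u_i'$, no other vertex of $D$ relied on $u_i'$); and every $w\in\overline{D}$ adjacent to some $u_i'$ had, by the failure of $(ii)$ for $u_i'$, a second $D$-neighbour $w^\ast\ne u_i'$. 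The crucial step, which I expect to be the main obstacle, is to show that this $w^\ast$ is not one of the deleted vertices $u_1',\dots,u_k'$: if $w^\ast=u_j'$ with $j\ne i$, then $u_i'-w-u_j'$ and $u_i'-u_i-v-u_j-u_j'$ would be two internally disjoint paths between $u_i'$ and $u_j'$, producing a cycle in the tree $T$, which is impossible. Hence $w^\ast\in D^\ast$ and $w$ keeps a dominator, while all remaining vertices have their dominators unchanged. Therefore $D^\ast$ is a total dominating set with $|D^\ast|<\gamma_t(T)$, the desired contradiction, which proves Lemma \ref{dist-V-D}.
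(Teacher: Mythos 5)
Your proposal is correct and takes essentially the same route as the paper's proof: both apply Remark \ref{theo-minimal} to the $D$-neighbours of $v$, extract the condition-(i) witnesses $s_i$ (your $u_i'$), and derive a contradiction via the exchange $(D\setminus\{s_1,\dots,s_r\})\cup\{v\}$, which would be a total dominating set smaller than $\gamma_t(T)$. The differences are only in presentation: you argue by contradiction from the outset, apply the Remark a second time to each $u_i'$ (obtaining the $K_2$-structure, slightly more than is needed), and make explicit the distinctness and no-cycle verifications that the paper leaves implicit.
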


\begin{proof}
Let $v\in V(T)-(D\cup L(T))$. Since $|N(v)|\geq 2$, we consider $N(v)=\{v_1,v_2, \ldots , v_r\}$ with $r\geq 2$. Clearly, $N(v)\subset D$ since $\overline{D}$ is independent. For every $v_i$, with $i\in \{1,\dots, r\}$, by Remark \ref{theo-minimal}, $v_i$ is adjacent to a leaf or there exist a vertex $s_i\in D$ such that $N(s_i)\cap D=\{v_i\}$. Hence, as $s_i\in D$, $N(s_i)\subset V(T)-D$. We assume that for every $i\in \{1,\dots, r\}$, $v_i$ is not  adjacent to a leaf $h$, otherwise  $d(v,h)=2$. Now, we suppose that $(N(s_i)-\{v_i\})\cap L(T)=\emptyset$. Also note that, by condition above, the vertices belonging to $N(s_i)$ are totally dominated by other vertices of $D$. So, we observe that the set $(D-\{s_1,s_2,\ldots , s_r\})\cup \{v\}$ is a total dominating set of $T$ of cardinality smaller than $|D|$, a contradiction. Furthermore, there exist $i\in \{1,\dots, r\}$ such that $(N(s_i)-\{v_i\})\cap L(T)\neq \emptyset$. Thus, for any $h\in (N(s_i)-\{v_i\})\cap L(T)$, it follows $d(v,h)=3$, and this completes the proof.
\end{proof}

From this point, the set of leaves having distance three with respect to at least one other leaf is denoted by $L_3(T)$, and given a $\gamma_{t,coi}(T)$-set $D$, we denote by $V_{2,3}(T)\subset V(T)-D$ the set of vertices having distance two or three to some leaf and by $V_6(T)\subset V(T)-D$ the set of vertices having distance three to some vertex of $V_{2,3}(T)$.

In order to provide a constructive characterization of the trees belonging to the family $\mathcal{T}_{\gamma_t}$, we need the following five operations $F_1$, $F_2$, $F_3$, $F_4$ and $F_5$ on a tree $T$ (by attaching a path $P$ to a vertex $v$ of $T$ we mean adding the path $P$ and joining $v$ to a vertex of $P$). Moreover, through all the next results we make use of the fact that any tree $T$ always contains a $\gamma_{t,coi}(T)$-set which does not contain leaves.

\begin{description}
  \item[Operation $F_1$:]  Attach a path $P_1$ to a vertex of $T$, which is in some $\gamma_{t,coi}(T)$-set.
  \item[Operation $F_2$:]  Attach a path $P_1$ to a vertex of $T$, which is in $L_3(T)$.
  \item[Operation $F_3$:]  Attach a path $P_2$ to a vertex of $T$, which is in $L_3(T)$.
  \item[Operation $F_4$:]  Attach a path $P_3$ to a vertex of $T$, which is in $V_{2,3}(T)$.
  \item[Operation $F_5$:]  Attach a path $P_3$ to a vertex of $T$, which is in $V_6(T)$.
\end{description}

Let $\mathcal{F}$ be the family of trees defined as $\mathcal{F} = \{T \mid T $ is obtained from $P_4$ by a finite sequence of operations $F_1, F_2, F_3, F_4$ or $F_5\}$. The Figure \ref{tree} contains a fairly representative example of a tree $T\in\mathcal{F}$.
We first show  that every tree of the family $\mathcal{F}$ belongs to the family $\mathcal{T}_{\gamma_t}$.

\begin{figure}[h]
\centering
\begin{tikzpicture}[scale=.8, transform shape]

\node [draw, shape=circle] (v1) at  (-8,0) {};
\node at (-7.5,-0.5) {\Large $v_1$};
\node [draw, shape=circle] (v2) at  (-6,0) {};
\node at (-5.5,-0.5) {\Large $v_2$};
\node [draw, shape=circle] (v3) at  (-4,0) {};
\node at (-3.5,-0.5) {\Large $v_3$};
\node [draw, shape=circle] (v4) at  (-2,0) {};
\node at (-1.5,-0.5) {\Large $v_4$};
\node [draw, shape=circle] (v5) at  (0,0) {};
\node at (0.5,-0.5) {\Large $v_5$};
\node [draw, shape=circle] (v6) at  (2,0) {};
\node at (2.5,-0.5) {\Large $v_6$};
\node [draw, shape=circle] (v7) at  (4,0) {};
\node at (4.5,-0.5) {\Large $v_7$};
\node [draw, shape=circle] (v8) at  (6,0) {};
\node at (6.5,-0.5) {\Large $v_8$};
\node [draw, shape=circle] (v9) at  (8,0) {};
\node at (8.5,-0.5) {\Large $v_9$};
\node [draw, shape=circle] (v10) at  (10,0) {};
\node at (10.5,-0.5) {\Large $v_{10}$};

\node [draw, shape=circle] (w1) at  (-4,-2) {};
\node at (-3.5,-2.5) {\Large $w_1$};
\node [draw, shape=circle] (w2) at  (6,-2) {};
\node at (6.5,-2.5) {\Large $w_2$};

\node [draw, shape=circle] (u1) at  (-6,-2) {};
\node at (-5.5,-2.5) {\Large $u_1$};
\node [draw, shape=circle] (u2) at  (-8,-2) {};
\node at (-7.5,-2.5) {\Large $u_2$};
\node [draw, shape=circle] (u3) at  (8,-2) {};
\node at (8.5,-2.5) {\Large $u_3$};

\draw(v1)--(v2)--(v3)--(v4)--(v5)--(v6)--(v7)--(v8)--(v9)--(v10);
\draw(v3)--(w1);
\draw(v8)--(w2);
\draw(u2)--(u1)--(w1);
\draw(u3)--(w2);

\end{tikzpicture}
\caption{A tree $T$ obtained from a path $P_4=v_1\,v_2\,v_3\,v_4$, applying the five operations $F_1$, $F_2$, $F_3$, $F_4$ and $F_5$. Firstly, operations $F_4$ and $F_5$ are applied by adding the path $P_3=v_5\,v_6\,v_7$ to the vertex $v_4$ through the edge $v_4v_5$ and the path $P_3=v_8\,v_9\,v_{10}$ to the vertex $v_7$ through the edge $v_7v_8$. Next, we apply the operation $F_1$ twice by attaching the vertices $w_1$ and $w_2$ to the vertices $v_3$ and $v_8$, respectively. Moreover, we apply the operation $F_2$ by adding the vertex $u_3$ to $w_2$. Finally, we apply the operation $F_3$ by adding the path $P_2=u_1\,u_2$ to the vertex $w_1$ through the edge $w_1u_1$.}
\label{tree}
\end{figure}
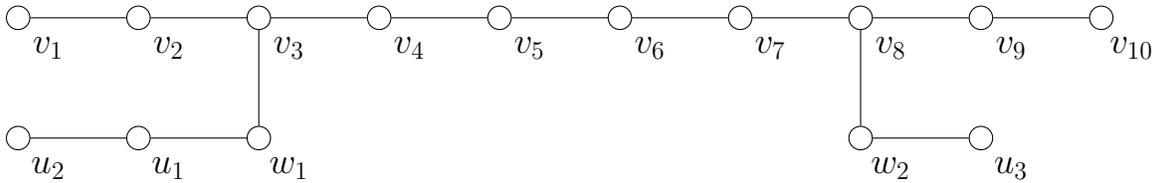

\begin{lemma}\label{lem-right}
If $T \in \mathcal{F}$, then $T\in \mathcal{T}_{\gamma_t}$.
\end{lemma}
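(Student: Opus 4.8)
The plan is to prove that $\gamma_{t,coi}(T) = \gamma_t(T)$ for every $T \in \mathcal{F}$ by structural induction on the number of operations used to build $T$ from the base case $P_4$. Since we always have $\gamma_{t,coi}(T) \ge \gamma_t(T)$ by Theorem \ref{min-bound}, the real content is to establish the reverse inequality $\gamma_{t,coi}(T) \le \gamma_t(T)$ along the induction, which amounts to exhibiting, for the tree obtained after each operation, a TC-ID set whose cardinality equals $\gamma_t$ of that tree. To make the induction go through I expect to strengthen the inductive hypothesis: it will not suffice to carry along merely the equality $\gamma_{t,coi}(T') = \gamma_t(T')$, but rather a $\gamma_{t,coi}(T')$-set $D'$ of cardinality $\gamma_t(T')$ that contains no leaves (the paper has reminded us such a set always exists) and that moreover contains the particular vertex to which the next operation will attach, or is compatible with the sets $L_3, V_{2,3}, V_6$ used to define where the operations may be applied.

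For the base case, I would simply verify directly that $\gamma_{t,coi}(P_4) = 2 = \gamma_t(P_4)$, taking the two central vertices as a TC-ID set. For the inductive step, let $T$ be obtained from $T' \in \mathcal{F}$ by one of $F_1,\dots,F_5$, and let $D'$ be the $\gamma_{t,coi}(T')$-set of cardinality $\gamma_t(T')$ furnished by the hypothesis. For each operation I would construct a candidate TC-ID set $D$ of $T$ from $D'$ and show two things: that $D$ totally dominates $T$, and that $\overline{D} = V(T) - D$ is (nonempty and) independent. The increase $|D| - |D'|$ must then match the increase $\gamma_t(T) - \gamma_t(T')$. Concretely, for $F_1$ (attach a single pendant $P_1$ to a vertex $v \in D'$) one keeps $D = D'$, since the new leaf is dominated by $v$, it lies outside $D$, and it creates no edge inside $\overline{D}$; one checks $\gamma_t$ is unchanged because $v$ already lies in a total dominating set and picks up a new private leaf. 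For the path-attaching operations $F_2,\dots,F_5$ (attaching $P_1$, $P_2$, or $P_3$ at a vertex of $L_3(T')$, $V_{2,3}(T')$, or $V_6(T')$) one adds to $D'$ an appropriate subset of the attached path's internal vertices — the precise vertices dictated by the distance conditions that define these vertex classes — so that the attached path becomes totally dominated, its endpoints land in the independent complement, and the net cardinality increase equals exactly the corresponding increase in $\gamma_t$.

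The main obstacle, and the step demanding the most care, is the simultaneous preservation of the independence of $\overline{D}$ and the minimality matching $|D| = \gamma_t(T)$ under each operation. Total domination alone is easy to maintain by greedily adding internal path vertices, but each added leaf or endpoint of an attached path must be verified to remain nonadjacent to any other vertex of $\overline{D}$, and this is exactly why the operations are restricted to vertices in the specially defined sets $L_3(T)$, $V_{2,3}(T)$, and $V_6(T)$ rather than to arbitrary vertices: those distance-three conditions (established via Lemma \ref{dist-V-D}) guarantee that the attachment point already has a leaf witness nearby, so that after attaching, the newly outside vertices do not collide with previously outside vertices. I would therefore treat the five operations in separate cases, in each case identifying precisely which vertices of the new path enter $D$, then arguing (i) that this is a valid TC-ID set, giving $\gamma_{t,coi}(T) \le |D|$, and (ii) that $|D| = \gamma_t(T)$ by exhibiting the increment in $\gamma_t$ matches, the latter following from the structure of minimal total dominating sets recalled in Proposition \ref{minimal-TDS} and Remark \ref{theo-minimal}. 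Combining with Theorem \ref{min-bound} closes the inequality and completes the induction.
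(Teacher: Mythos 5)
Your skeleton is indeed the same as the paper's: induction on the number of operations, base case $P_4$, and, for each of $F_1,\dots,F_5$, a TC-ID set of $T$ assembled from a leaf-free $\gamma_{t,coi}(T')$-set $D'$, with Theorem \ref{min-bound} supplying $\gamma_{t,coi}(T)\ge\gamma_t(T)$ and a separate verification that $\gamma_t$ grows by exactly the number of vertices added. Some of your bookkeeping is off, however: for $F_2$ and $F_3$ the vertex that enters $D$ is (also) the attachment vertex $v$ itself, a former leaf of $T'$ that becomes a support of $T$, not only ``internal vertices of the attached path''; and for $F_4$, $F_5$ the endpoint $h_1$ of the attached path $h_1u_1h_2$ joined to $v$ must be placed \emph{inside} $D$ --- if both endpoints were left outside, the edge $vh_1$ would join two outside vertices. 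Also, strengthening the induction hypothesis to carry a $D'$ containing ``the vertex to which the next operation will attach'' is neither possible nor needed: the hypothesis cannot refer to a future operation, and the preconditions of the operations already guarantee that a suitable $D'$ exists at each step.

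The genuine gap is your account of what the classes $L_3(T')$, $V_{2,3}(T')$ and $V_6(T')$ are for, which is exactly the half of the argument you defer. You claim the distance conditions are there so that ``the newly outside vertices do not collide with previously outside vertices,'' i.e., for independence of $\overline{D}$. That is false: independence of the constructed complement holds for these attachments no matter where the path is attached. What the distance conditions actually secure is the increment inequality $\gamma_t(T)\ge\gamma_t(T')+k$, and this does \emph{not} follow from Proposition \ref{minimal-TDS} and Remark \ref{theo-minimal} alone --- it fails outright without the distance hypotheses. Concretely, let $T'$ be the spider with center $c$ and three legs $c\,a_i\,b_i$, $i\in\{1,2,3\}$; then $T'\in\mathcal{F}$, $\gamma_t(T')=\gamma_{t,coi}(T')=4$, and $D'=\{c,a_1,a_2,a_3\}$ is its leaf-free minimum TC-ID set. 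Attach $P_3=h_1u_1h_2$ at the leaf $b_1$ (not an allowed operation: $b_1\notin V_{2,3}(T')\cup V_6(T')$, every other leaf being at distance four). Your construction yields $D=D'\cup\{h_1,u_1\}$, a perfectly valid TC-ID set with independent complement $\{b_1,b_2,b_3,h_2\}$ and cardinality $\gamma_t(T')+2=6$; yet $\{a_2,a_3,c,h_1,u_1\}$ totally dominates the new tree, so $\gamma_t(T)=5$: the increment is $1$, not $2$, the matching fails, and indeed $\gamma_{t,coi}(T)=6>\gamma_t(T)$, so the new tree is not in $\mathcal{T}_{\gamma_t}$. Thus your step (ii) is precisely where the distance conditions must be invoked, with a separate structural argument for each operation, as the paper does: for instance, for $F_2$, the hypothesis $v\in L_3(T')$ yields a path $v\,u_1\,u_2\,h$ with $u_2$ a support, which forces $v$, $u_1$ and $u_2$ into any leaf-free $\gamma_t(T)$-set $A$ and makes $A-\{v\}$ a total dominating set of $T'$, whence $\gamma_t(T')\le\gamma_t(T)-1$ and all inequalities collapse to equalities. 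Without analogous arguments for $F_3$, $F_4$ and $F_5$, your induction does not close.
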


\begin{proof}
We proceed by induction on the number $r(T)$ of operations required to construct the tree $T$. If $r(T)=0$, then $T=P_4$ and $T \in \mathcal{T}_{\gamma_t}$.  This establishes the base case. Hence, we now  assume that $k\geq 1$ is an integer and that each tree $T' \in \mathcal{F}$ with $ r(T')<k$ satisfies that $T' \in \mathcal{T}_{\gamma_t}$.
Let $T \in \mathcal{F}$ be a tree for wich $r(T)=k$. Since $T$ can be obtained from a tree $T' \in \mathcal{F}$ with $ r(T')=k-1$ by one of the operations $F_1, F_2, F_3, F_4$ or $F_5$, we shall prove that $T \in \mathcal{T}_{\gamma_t}$, by considering a $\gamma_{t,coi}(T')$-set $D'$ containing no leaves and through the following situations.\\

\textbf{Case 1.} $T$ is obtained from $T'$ by operation $F_1$. Let $u$ be the vertex added to $T$ in order to obtain $T'$. Since $u$ is a leaf of $T$ and is adjacent to a vertex of $D'$, the set $D'$ remains to be a total dominating set in $T$. Moreover, $D'$ is a $\gamma_{t}(T)$-set, since otherwise we would find a total dominating set in $T'$ of cardinality smaller than $\gamma_{t}(T')$. On the other hand, since $(V(T')-D')\cup \{u\}$ is independent, we deduce $D'$ is a TC-ID set in $T$. Thus, $\gamma_{t,coi}(T)\le |D'|=\gamma_{t,coi}(T')=\gamma_t(T')=\gamma_t(T)$ (by also using the inductive hypothesis). Thus, by Theorem \ref{min-bound}, we get the equality $\gamma_{t,coi}(T)=\gamma_t(T)$, which means $T \in \mathcal{T}_{\gamma_t}$.\\

\textbf{Case 2.} $T$ is obtained from $T'$ by operation $F_2$. Assume $T$ is obtained from $T'$ by adding the vertex $u$ and the edge $uv$ where $v\in L_3(T')$. As $v\in L_3(T')$, there exist a path $vu_1u_2h$ in $T'$ where $h$ is a leaf and $u_1, u_2$ are support vertices adjacent to $v,h$, respectively.
Now, in $T$, the vertices $u_2,v$ are supports and belong to any TC-ID set in $T$. Hence, the set $D=D'\cup\{v\}$ is a TC-ID set in $T$, and so
\begin{equation}\label{eq-case-2}
 \gamma_t(T)\leq \gamma_{t,coi}(T)\le \gamma_{t,coi}(T')+1= \gamma_t(T')+1
\end{equation}
(by also using Theorem \ref{min-bound} and the inductive hypothesis). Now, let $A$ be a $\gamma_t(T)$-set containing no leaves. Notice that the vertex $v$ is a support and so, it belong to $A$, also the vertex $u_1$ belongs to $A$ too, because $v$ has degree two. Moreover, note that the set $A-\{v\}$ is a total dominating set in $T'$, which leads to $\gamma_t(T')\le \gamma_t(T)-1$. By using this, it follows that all the inequalities in (\ref{eq-case-2}) must be equalities. Thus $\gamma_{t,coi}(T)=\gamma_t(T)$, and $T \in \mathcal{T}_{\gamma_t}$.\\

\textbf{Case 3.} $T$ is obtained from $T'$ by operation $F_3$. Assume $T$ is obtained from $T'$ by adding the path $P_2=h_1h_2$ to a vertex $v\in L_3(T')$ through the edge $vh_1$. By using some similar reasons as in the case above (now we must use $D=D'\cup \{v,h_1\}$ instead of $D=D'\cup\{v\}$), it is observed that $T \in \mathcal{T}_{\gamma_t}$.\\

\textbf{Case 4.} $T$ is obtained from $T'$ by operation $F_4$. Assume $T$ is obtained from $T'$ by adding the path $P_3=h_1u_1h_2$ to a vertex $v\in V_{2,3}(T')$ through the edge $vh_1$. We notice that $u_1,h_1$ belong to any TC-ID set containing no leaves of $T$. Hence, the set $D=D'\cup\{u_1,h_1\}$ is a TC-ID set in $T$. Thus $\gamma_t(T)\leq \gamma_{t,coi}(T)\le \gamma_{t,coi}(T')+2= \gamma_t(T')+2$ (by also using Theorem \ref{min-bound} and the inductive hypothesis). Now, let $A$ be a $\gamma_t(T)$-set. Since the vertex $u_1$ is a support, it belongs to $A$ and so, $|A\cap \{h_1,u_1,h_2\}|\geq 2$. Moreover, note that $|A\cap V(T')|\geq \gamma_t(T')$. Hence, $\gamma_t(T)=|A|\geq \gamma_t(T')+2$. Again, as in Case 2, we deduce $\gamma_{t,coi}(T)=\gamma_t(T)$, which means $T \in \mathcal{T}_{\gamma_t}$.\\

\textbf{Case 5.} $T$ is obtained from $T'$ by operation $F_5$. Assume $T$ is obtained from $T'$ by adding the path $P_3=h_1u_1h_2$ to a vertex $v\in V_6(T')$ through the edge $vh_1$. By using some similar reasons as in the case above, it can be deduced that $\gamma_{t,coi}(T)=\gamma_t(T)$, which gives $T \in \mathcal{T}_{\gamma_t}$.

\end{proof}

We now turn our attention to the opposite direction concerning the lemma above. In this sense, from now on we shall need the following terminology and notation in our results. Given a tree $T$ and a set $S\subset V(T)$, by $T-S$ we denote a tree obtained from $T$ by removing from $T$ all the vertices in $S$ and all its incident edges (if $S=\{v\}$ for some vertex $v$, then we simply write $T-v$). For an integer $r\geq 2$, by $Q_r$ we mean a graph which is obtained from a path $P_{r+2}=v s s_1 s_2 \ldots s_r$ by attaching a path $P_1$ to every vertex of $P_{r+2}-v$. In Figure \ref{figure-1} we show the example of $Q_5$.

\begin{figure}[h]
\centering
\begin{tikzpicture}[scale=.6, transform shape]
\node [draw, shape=circle] (s) at  (0,0) {};
\node at (0.5,-0.5) {\Large $s$};
\node [draw, shape=circle] (s1) at  (0,1.5) {};
\node [draw, shape=circle] (s2) at  (0,-1.5) {};
\node at (0.5,-1.5) {\Large $v$};

\node [draw, shape=circle] (a1) at  (1.5,0) {};
\node at (1.5,-0.5) {\Large $s_1$};
\node [draw, shape=circle] (a11) at  (1.5,1.5) {};

\node [draw, shape=circle] (a2) at  (3,0) {};
\node at (3,-0.5) {\Large $s_2$};
\node [draw, shape=circle] (a21) at  (3,1.5) {};

\node [draw, shape=circle] (a3) at  (4.5,0) {};
\node at (4.5,-0.5) {\Large $s_3$};
\node [draw, shape=circle] (a31) at  (4.5,1.5) {};

\node [draw, shape=circle] (a4) at  (6,0) {};
\node at (6,-0.5) {\Large $s_4$};
\node [draw, shape=circle] (a41) at  (6,1.5) {};

\node [draw, shape=circle] (a5) at  (7.5,0) {};
\node at (7.5,-0.5) {\Large $s_5$};
\node [draw, shape=circle] (a51) at  (7.5,1.5) {};

\draw(s)--(a1)--(a2)--(a3)--(a4)--(a5);
\draw(s)--(s1);
\draw(s)--(s2);
\draw(a1)--(a11);
\draw(a2)--(a21);
\draw(a3)--(a31);
\draw(a4)--(a41);
\draw(a5)--(a51);

\end{tikzpicture}
\caption{The structure of the tree $Q_5$.}
\label{figure-1}
\end{figure}
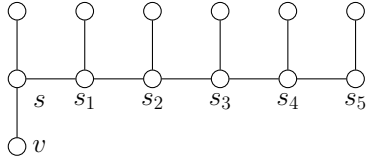

We next show that every tree of the family $\mathcal{T}_{\gamma_t}$ belongs to the family $\mathcal{F}$.

\begin{lemma}\label{lem-left}
If $T\in \mathcal{T}_{\gamma_t}$, then $T \in \mathcal{F}$.
\end{lemma}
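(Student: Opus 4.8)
The plan is to prove the converse by induction, reversing each of the five operations $F_1,\dots,F_5$: starting from a tree $T\in\mathcal{T}_{\gamma_t}$ with $|V(T)|>4$, I would identify a subtree $T'$ obtainable from $T$ by deleting a small attached structure, show that $T'\in\mathcal{T}_{\gamma_t}$ and that $T$ arises from $T'$ by one of the operations, then invoke the inductive hypothesis to conclude $T'\in\mathcal{F}$, hence $T\in\mathcal{F}$. The base case is $T=P_4$. Throughout I would fix a $\gamma_{t,coi}(T)$-set $D$ containing no leaves (this always exists, as noted before the operations are defined), so that $\overline{D}$ is independent and $\gamma_{t,coi}(T)=\gamma_t(T)$.

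\smallskip

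\textbf{Locating a peripheral structure to strip.} First I would root $T$ at a leaf and examine a longest path $v_1 v_2 \cdots v_\ell$, focusing on the deep end $v_1,v_2,v_3,\dots$. Because $v_1$ is a leaf, $v_2$ is a support vertex, so $v_2\in D$. The analysis would split according to the local configuration near $v_2,v_3,v_4$: how many leaves hang off $v_2$, whether $v_3$ is a support, and the distances from $v_3$ to the nearest leaves. The goal in each configuration is to exhibit exactly one of the reverse operations. Concretely: if there is a removable leaf whose deletion preserves the total domination structure, I would reverse $F_1$; if a pendant $P_1$ or $P_2$ is attached at a vertex which, after deletion, lands in $L_3(T')$, I would reverse $F_2$ or $F_3$; and if a pendant $P_3$ (a path $h_1 u_1 h_2$) is attached at a vertex that becomes a member of $V_{2,3}(T')$ or $V_6(T')$, I would reverse $F_4$ or $F_5$. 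The tree $Q_r$ introduced just before the statement is surely the designated tool for handling the long caterpillar-like branches, where each $s_i$ carries a pendant leaf; I expect a preliminary claim showing any such $Q_r$-branch can be peeled off by repeated $F_4/F_5$ steps.

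\smallskip

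\textbf{Verifying the deletion stays in the family.} For each reverse operation, the crux is a pair of inequalities establishing $\gamma_{t,coi}(T')=\gamma_t(T')$, i.e. that $T'\in\mathcal{T}_{\gamma_t}$, so that the inductive hypothesis applies. One direction, $\gamma_{t,coi}(T')\ge\gamma_t(T')$, is immediate from Theorem~\ref{min-bound}. For the reverse direction I would restrict the chosen set $D$ to $T'$ and argue that removing the peeled vertices yields a TC-ID set of $T'$ whose size matches $\gamma_t(T')$; here Proposition~\ref{minimal-TDS} and Remark~\ref{theo-minimal} control which vertices of $D$ are genuinely needed, and Lemma~\ref{dist-V-D} guarantees the distance-to-leaf conditions that place the attachment vertex into the correct set ($L_3$, $V_{2,3}$, or $V_6$) of $T'$. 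This is essentially the mirror image of the counting done in Cases 1--5 of Lemma~\ref{lem-right}, run backwards.

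\smallskip

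\textbf{The main obstacle} is ensuring the case analysis is \emph{exhaustive} and that in every configuration at least one reverse operation genuinely applies — in particular that I never get stuck with a peripheral structure not produced by any $F_i$. The delicate point is the bookkeeping of distances: after deleting vertices, the attachment vertex must provably lie in $L_3(T')$, $V_{2,3}(T')$, or $V_6(T')$ as defined \emph{relative to a no-leaf $\gamma_{t,coi}(T')$-set}, and these membership conditions can shift when the tree is pruned. I would therefore prove a small stability lemma guaranteeing that the relevant leaves and witnessing vertices survive the deletion, and I would organize the configurations by the degree and support-status of $v_2,v_3$ so that the five operations exactly exhaust the possibilities, using the equality $\gamma_{t,coi}=\gamma_t$ to rule out the configurations that would otherwise force a strict inequality (mirroring how Lemmas~\ref{no-independent} and~\ref{no-caminos-disj} were used earlier to exclude unwanted structures).
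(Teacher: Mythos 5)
Your overall framework is the same as the paper's: induction on the order of $T$, fix a $\gamma_{t,coi}(T)$-set $D$ containing no leaves, strip a peripheral structure to get a smaller tree $T'$, show $T'\in\mathcal{T}_{\gamma_t}$ so the inductive hypothesis gives $T'\in\mathcal{F}$, and recognize $T$ as obtained from $T'$ by one of $F_1,\dots,F_5$. But what you have written is a plan, not a proof, and the part you defer is exactly the part that constitutes the paper's proof. The substance of the argument is an exhaustive case analysis --- in the paper, organized first by comparing $|S(T)|$ with $|L(T)|$ and by whether $|SS(T)|=0$, and then, in the hard case, by taking two leaves $h,h'$ at maximum distance with a semi-support $v$ on the path at distance two from $h$, and splitting on $|N(s)\cap S(T)|$, $|N(v)\cap S(T)|$ and $\deg(u_1)$ --- together with, in each case, two concrete verifications: (a) that $\gamma_{t,coi}(T')=\gamma_t(T')$, which in several cases is \emph{not} obtained merely by restricting $D$ to $T'$ but requires a separate argument about $\gamma_t(T')$-sets (e.g.\ showing a no-leaf $\gamma_t(T')$-set must contain $s$ and $s_1$ and hence is still a total dominating set of $T$, or that $D-\{s_r\}$ would otherwise contradict minimality of $\gamma_t(T)$); and (b) that the attachment vertex provably lies in $L_3(T')$, $V_{2,3}(T')$ or $V_6(T')$. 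You explicitly flag both (a) and (b) --- exhaustiveness and the ``stability'' of the membership conditions under pruning --- as the main obstacles, and then leave them unresolved, promising a ``small stability lemma'' that is never stated or proved. The paper has no such single lemma; it handles the issue case by case, using Remark~\ref{theo-minimal}, Lemma~\ref{dist-V-D} and the structure of the subgraph $Q_r$ (for instance, in Case 3.3.3 it shows $(N(r)-\{u_2\})\subset L(T)\cup V_{2,3}(T)$ and deduces $u_1\in V_6(T)$, which is what legitimizes the reverse $F_5$ step).

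One further concrete inaccuracy: you expect the $Q_r$-branches to be ``peeled off by repeated $F_4/F_5$ steps.'' In the paper the $Q_r$ structure is not removed that way at all; when $r=1$ one deletes the single leaf $h$ and reverses $F_1$, and when $r\ge 2$ one deletes the leaf $h_r$ and reverses $F_2$, after showing $D-\{s_r\}$ is a $\gamma_t(T')$-set. So even the one structural prediction you commit to points in the wrong direction. In short, the skeleton is right and matches the paper, but the proof itself --- the case analysis and its per-case verifications --- is missing, so this cannot be credited as a proof of the lemma.
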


\begin{proof}
We proceed by induction on the order $n\geq 4$ of the trees $T\in \mathcal{T}_{\gamma_t}$. If $T$ is a double star, then $T$ can be obtained from $P_4$ by repeatedly applying operation $F_1$. This establishes the base case. We assume next that $k > 4$ is an integer and that each tree $T' \in \mathcal{T}_{\gamma_t}$ with $|V(T')|<k$ satisfies $T' \in \mathcal{F}$.

Let $T$ be a tree such that $T\in \mathcal{T}_{\gamma_t}$ and $|V(T)|=k$. Let $D$ be a $\gamma_{t,coi}(T)$-set containing no leaves and let $B=V(T)-D$. We analyze the following situations.\\

\textbf{Case 1: $|S(T)|<|L(T)|$.} We consider a support vertex $v$ that is adjacent to at least two leaves. Let $h\in N(v)\cap L(T)$ and $T'=T-h$. Thus, the set $D$ is a $\gamma_t(T')$-set too, and by inductive hypothesis, $T' \in \mathcal{F}$. Therefore, since $T$ can be obtained from $T'$ by operation $F_1$, it follows $T \in \mathcal{F}$.\\

\textbf{Case 2: $|S(T)|=|L(T)|$ and $|SS(T)|=0$.} In this case we note that $V(T)=S(T)\cup L(T)$ and clearly, $S(T)$ is a $\gamma_{t,coi}(T)$-set (moreover  $|S(T)|\geq 3$ since otherwise $T$ is a double star). Let $s\in S(T)$ such that $|N(s)\cap S(T)|=1$ (note that such $s$ always exists) and let $h\in L(T)$ be the leaf adjacent to $s$. We first notice that there exists a leaf having distance three to the support $s$. Thus, we deduce that $S'(T)=S(T)-\{s\}$ is a $\gamma_{t,coi}(T')$-set, where $T'=T-h$. By induction hypothesis $T' \in \mathcal{F}$ and, since $T$ can be obtained from $T'$ by operation $F_2$, we get $T \in \mathcal{F}$.\\

\textbf{Case 3: $|S(T)|=|L(T)|$ and $|SS(T)|>0$.}  Herein we denote by $P(x,y)$ the set of vertices of one shortest path between $x$ and $y$, including $x$ and $y$. Let $h,h'$ be two leaves at the maximum possible distance in $T$ such that there is $v \in SS(T)\cap P(h,h')$ with $d(v,h)=2$ or $d(v,h')=2$. Without loss of generality assume that $d(v,h)=2$ and let $s$ be the support adjacent to $h$. Since $|S(T)|=|L(T)|$ and by the maximality of the path between $h$ and $h'$, we observe that $N(s)\subset S(T)\cup \{h,v\}$ and also, that every support vertex is adjacent to exactly one leaf. We have now some possible scenarios.\\

\textbf{Case 3.1  $|N(s)\cap S(T)|=1$.} Hence, by the maximality of the path $P(h,h')$, it must happen that $T$ has an induced subgraph isomorphic to a graph $Q_r$, as previously described, obtained from the vertices $v, s, h$ and some supports, say $s_1,s_2, \ldots s_r\in S(T)$, with the leaves $h_1,h_2,\ldots,h_r$, adjacent to the supports $s_1,s_2, \ldots s_r$, respectively, and such that $\{s_1,\dots,s_r,h_1,\dots,h_r\}\cap P(h,h')=\emptyset$.

Assume $r=1$. Note that $s,s_1\in D$ and that $h,h_1\notin D$. Let $T'=T-h$. Notice that $D$ is also a TC-ID set in $T'$, and so
\begin{equation}\label{eq-T-prime-1}
  \gamma_t(T')\le \gamma_{t,coi}(T')\le \gamma_{t,coi}(T)=\gamma_t(T)
\end{equation}
(by using Theorem \ref{min-bound} and hypothesis). On the other hand, let $A$ be a $\gamma_t(T')$-set containing no leaves. We observe that $s_1 \in A$ because $s_1$ is a support in $T'$, and $s\in A$ because $\delta(s_1)=2$. Thus, clearly $A$ is also a total dominating set in $T$. Hence $\gamma_t(T)\le |A|=\gamma_t(T')$. Thus, all the inequalities in the relation (\ref{eq-T-prime-1}) must be equalities, from which follows $\gamma_{t,coi}(T')=\gamma_t(T')$  and by the inductive hypothesis $T' \in \mathcal{F}$. Since $T$ can be obtained from $T'$ by operation $F_1$, we obtain $T \in \mathcal{F}$.

Assume now $r\geq 2$. Note that $s,s_1,\ldots,s_r\in D$ and that there is a leaf at distance three from $s_r$.  Let $T'=T-h_r$. Hence, $D-\{s_r\}$ is  a TC-ID set in $T'$, and so $\gamma_t(T')\le \gamma_{t,coi}(T')\le \gamma_{t,coi}(T)-1=\gamma_t(T)-1$ (by using Theorem \ref{min-bound} and hypothesis). Moreover, the set $D-\{s_r\}$ is a $\gamma_t(T')$-set, otherwise we would find a total dominating set of $T$ of cardinality smaller than $\gamma_t(T)$, which is not possible.  So, $\gamma_t(T')= \gamma_t(T)-1$ which leads to $\gamma_{t,coi}(T')=\gamma_t(T')$, as in the previous case. Now, by the inductive hypothesis $T' \in \mathcal{F}$, and since $T$ can be obtained from $T'$ by operation $F_2$, we deduce $T\in \mathcal{F}$.\\

\textbf{Case 3.2  $|N(s)\cap S(T)|>1$.} An analogous procedure to the one above (\textbf{Case 3.1}) leads to our desired conclusion, based on the fact that $s$ must have at least two neighbors $s'_1,s''_1\in S(T)$ and there are at least two induced subgraphs isomorphic to the graphs $Q_{r'}$ and $Q_{r''}$,  which can be used instead of $Q_r$ of Case 3.1.\\

\textbf{Case 3.3: $|N(s)\cap S(T)|=0$.} Clearly, $s$ has degree two since it has one leaf neighbor, no support neighbors and cannot have more than one (it has exactly one) semi-support neighbor due to the maximality of $P(h,h')$. Also, it must happen $v\in D$, $h\in B$ and $s\in D$. Assume the subgraph induced by $P(h,h')$ is $h\, s\, v\, u_1\, u_2\, u_3\,u_4 \ldots s'\, h'$, where $h, h' \in L(T)$ and $s,s'\in S(T)$. Note that $N(v)\subset S(T)\cup \{u_1\}$. We consider again some possible scenarios.\\

\textbf{Case 3.3.1: $|N(v)\cap S(T)|>1$.} In this case, the vertex $v$ is also totally dominated by another support $s_v$ different from $s$. Let $h_v$ be the leaf adjacent to the support $s_v$. Notice that $D'=D-\{s\}$ is a TC-ID set of $T'=T-h$. Moreover, we note that the vertex $s$ is a leaf in $T'$ having distance three to the leaf $h_v$. So, by using a similar procedure as above (\textbf{Case 3.1} and $r\geq 2$) we obtain $T' \in \mathcal{F}$. Therefore, due to that $T$ can be obtained from $T'$ by operation $F_2$, it follows $T \in \mathcal{F}$.\\

\textbf{Case 3.3.2: $|N(v)\cap S(T)|=1$ and $|N(u_1)|\geq 3$.} Clearly $s,v$ have degree two and belong to $D$.  We firstly consider the case whether $u_1\in D$. By Remark \ref{theo-minimal} we note that $N(u_1)\cap L(T)\neq \emptyset$ or that there is a vertex $r\in D$ with $N(r)\cap D=\{u_1\}$. If $N(u_1)\cap L(T)\neq \emptyset$, then $D'=D-\{s\}$ is a TC-ID in $T'=T-h$ set, and by using a similar procedure as above (\textbf{Case 3.1} and $r\geq 2$) we obtain $T' \in \mathcal{F}$. Since $T$ can be obtained from $T'$ by operation $F_2$, we are able to claim $T \in \mathcal{F}$.

On the other hand, assume that $N(u_1)\cap L(T)=\emptyset$ and there is a vertex $r\in D$ such that $N(r)\cap D=\{u_1\}$. We note that, by Remark \ref{theo-minimal}, $(N(r)-\{u_1\})\cap L(T)\neq \emptyset$. Hence, $D'=D-\{s\}$ is a TC-ID set in $T'=T-\{h,s\}$. Thus, $\gamma_t(T')\le \gamma_{t,coi}(T')\le \gamma_{t,coi}(T)-1=\gamma_t(T)-1$ (by using Theorem \ref{min-bound} and hypothesis). Again, by using a similar procedure as above (\textbf{Case 3.1} and $r\geq 2$) we get that $\gamma_t(T')=\gamma_t(T)-1$. So, $\gamma_{t,coi}(T')=\gamma_t(T')$, and by inductive hypothesis, $T' \in \mathcal{F}$. Also, it can relatively clearly be seen that $v$ is having distance three to a leaf. This means $T$ can be obtained from $T'$ by operation $F_3$, and so $T \in \mathcal{F}$.

Now, consider the case in which $u_1\in B$. By the maximality of $P(h,h')$ and by the fact that $|N(u_1)|\geq 3$, there is a leaf distinct to $h$ at distance two or three from $u_1$. Hence, the set $D'=D-\{s,v\}$ is a TC-ID set in $T'=T-\{h,s,v\}$. Again, by using a similar procedure as above (\textbf{Case 3.1} and $r\geq 2$) we obtain $T' \in \mathcal{F}$ and, due to that $T$ can be obtained from $T'$ by operation $F_4$, we get $T \in \mathcal{F}$.\\

\textbf{Case 3.3.3: $|N(v)\cap S(T)|=1$ and $|N(u_1)|=2$.} Clearly $s,v,u_1$ have degree two and $s,v$ belong to $D$.  We only consider the case whether $u_1\in B$, otherwise $u_1\in D$ implies that $u_2$ is a leaf and $T$ is $P_5$, which can be obtained by operation $F_2$ from $P_4$. As $u_1\in B$, we get $u_2\in D$. Notice that, as $u_2$ has to be totally dominated, there exist a vertex  $r\in D$ such that $N(r)\cap D=\{u_2\}$. So, by Remark \ref{theo-minimal} and Lemma \ref{dist-V-D}, it follows $(N(r)-\{u_2\})\subset L(T)\cup V_{2,3}(T)$.

If $(N(r)-\{u_2\})\cap L(T))\neq \emptyset$, then this case is analogous to the \textbf{Case 3.3.2} and $u_1\in B$. If $(N(r)-\{u_2\})\subset V_{2,3}(T)$, then we see that $u_1\in V_6(T)$. So, the set $D'=D-\{s,v\}$ is a TC-ID set in $T'=T-\{h,s,v\}$ and again, by using a similar procedure as above (\textbf{Case 3.1} and $r\geq 2$) we obtain $T' \in \mathcal{F}$. Finally, due to that $T$ can be obtained from $T'$ by operation $F_5$, we have $T \in \mathcal{F}$, which completes the proof.\\
\end{proof}

As an immediate consequence of Lemma \ref{lem-right} and Lemma \ref{lem-left} we have the following characterization.

\begin{theorem}\label{teo-lower}
Let $T$ be a tree. Then $T\in \mathcal{T}_{\gamma_t}$ if and only if $T \in \mathcal{F}$.
\end{theorem}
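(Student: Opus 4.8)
The plan is to observe that the stated equivalence splits cleanly into its two implications, each of which has already been established as a separate lemma, so the proof amounts to recording that these two lemmas together furnish the biconditional. Concretely, I would first note that the implication ``$T\in\mathcal{F}\Rightarrow T\in\mathcal{T}_{\gamma_t}$'' is exactly the content of Lemma \ref{lem-right}, and that the converse implication ``$T\in\mathcal{T}_{\gamma_t}\Rightarrow T\in\mathcal{F}$'' is exactly the content of Lemma \ref{lem-left}. Since both directions hold, the sets $\mathcal{F}$ and $\mathcal{T}_{\gamma_t}$ coincide on trees, which is precisely the assertion $T\in\mathcal{T}_{\gamma_t}\iff T\in\mathcal{F}$.

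There is essentially no further work to do at the level of the theorem itself: the entire substance lives in the two lemmas. The first (``soundness'' of the construction) was proved by induction on the number $r(T)$ of construction operations, verifying that each of $F_1,\dots,F_5$ preserves the equality $\gamma_{t,coi}=\gamma_t$; the second (``completeness'') was proved by induction on the order $n$ of the tree through the case analysis on $|S(T)|$, $|L(T)|$, $|SS(T)|$ that peels off an appropriate pendant structure and invokes the inductive hypothesis. In writing the short proof of the theorem I would simply cite both lemmas and conclude.

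If one wished to stress where the genuine difficulty lay, the honest remark is that it is entirely in Lemma \ref{lem-left}: the forward direction only needs to check five local operations, whereas the reverse direction requires showing that \emph{every} tree realizing $\gamma_{t,coi}(T)=\gamma_t(T)$ can be reduced, which forces the intricate subdivision into Cases $3.1$--$3.3.3$ and the auxiliary distance bookkeeping via $L_3(T)$, $V_{2,3}(T)$, $V_6(T)$ together with Lemma \ref{dist-V-D}. For the theorem as stated, however, all of that labour is already discharged, so the proof is the one-line composition of the two implications.

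\begin{proof}
The two implications constituting the equivalence are precisely the statements of Lemma \ref{lem-right} and Lemma \ref{lem-left}. Indeed, Lemma \ref{lem-right} asserts that $T\in\mathcal{F}$ implies $T\in\mathcal{T}_{\gamma_t}$, while Lemma \ref{lem-left} asserts the converse, namely that $T\in\mathcal{T}_{\gamma_t}$ implies $T\in\mathcal{F}$. Combining these two implications yields $T\in\mathcal{T}_{\gamma_t}$ if and only if $T\in\mathcal{F}$, as claimed.
\end{proof}
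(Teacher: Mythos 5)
Your proposal is correct and is exactly the paper's own argument: the theorem is stated as an immediate consequence of Lemma \ref{lem-right} (the implication $T\in\mathcal{F}\Rightarrow T\in\mathcal{T}_{\gamma_t}$) and Lemma \ref{lem-left} (the converse), and your proof simply combines these two implications just as the paper does.
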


We next see that all the operations $F_1$ to $F_4$ are required in the characterization above. First, we see that operation $F_1$ is required to obtain a double star from the path $P_4$. The operations $F_2, F_3,F_4$ are required to obtain the paths $P_5,P_6,P_7$, respectively, from the path $P_4$, and the path $P_{10}$ can only be obtained from $P_4$ by a sequence of operations $F_4, F_5$.

\section*{Concluding remarks}

We have study several combinatorial and complexity properties of the total co-independent domination number of graphs. As a consequence of the study a couple of questions could be remarked as a possible future research lines.
\begin{itemize}
  \item We have proved that computing the total co-independent domination number of graphs is NP-hard even when restricted to planar graphs of maximum degree at most 3. However, it would be interesting to find some non trivial families of graphs in which the problem above can be solved in polynomial time. On the other hand, the bounds of Theorem \ref{th-cover-tcid} together with the fact that the problem of computing the vertex cover number can be approximated within a factor of 2, allow to claim that the problem of computing the total co-independent domination number can be approximated within a constant factor. In this sense, it would be interesting to give some other approximation (or inapproximation) results on this parameter.
  \item We have characterized the family of graphs achieving the upper bound of Theorem \ref{th-cover-tcid}. According to the construction of such family, it seems one could also characterize the graphs $G$ for which $\gamma_{t,coi}(G)=2\alpha(G)-k$ for some values of $k$ like for instance $k=2$ or $k=3$. Moreover, it would be of interest to characterize the family of graphs attaining the lower bound of Theorem \ref{th-cover-tcid} (note that for instance the trees satisfying such bound were characterized in \cite{Cabrera2017}).
\end{itemize}

\end{document}